\definecolor{RoyalBlue}{cmyk}{1, 0.50, 0, 0}
\definecolor{Coral}{RGB}{240,90,110}
\definecolor{Maroon}{cmyk}{0, 0.87, 0.68, 0.32}
\begin{document}

\title*{Tensor completion in hierarchical tensor representations}
\author{Holger Rauhut, Reinhold Schneider and \v{Z}eljka Stojanac }
\institute{Holger Rauhut \at RWTH Aachen University, Lehrstuhl C f{\"u}r Mathematik (Analysis), Templergraben 55, 52062 Aachen Germany, \email{rauhut@mathc.rwth-aachen.de}
\and Reinhold Schneider \at Technische Universit{\"a}t Berlin, Stra\ss e des 17. Juni 136, 10623 Berlin,  \email{schneidr@math.tu-berlin.de}
\and \v{Z}eljka Stojanac \at RWTH Aachen University, Lehrstuhl C f{\"u}r Mathematik (Analysis), Templergraben 55, 52062 Aachen Germany, \email{stojanac@mathc.rwth-aachen.de}}
%
%
\maketitle

\abstract{Compressed sensing extends from the recovery of sparse vectors from undersampled measurements via efficient algorithms
to the recovery of matrices of low rank from incomplete information. Here we consider a further extension
to the reconstruction of tensors of low multi-linear rank in recently introduced hierarchical tensor formats
from a small number of measurements. Hierarchical tensors are a flexible generalization of the well-known Tucker representation, which have
the advantage that the number of degrees of freedom of a low rank tensor 
does not scale exponentially with the order of the tensor. While corresponding tensor decompositions can be computed efficiently via
successive applications of (matrix) singular value decompositions, some important properties of the singular value decomposition
do not extend from the matrix to the tensor case. This results in major computational and theoretical difficulties in designing and analyzing
algorithms for low rank tensor recovery. For instance, a canonical analogue of the tensor nuclear norm is NP-hard to compute
in general, which is in stark contrast to the matrix case. In this book chapter we consider versions of iterative hard thresholding schemes
adapted to hierarchical tensor formats. A variant builds on methods from Riemannian optimization and 
uses a retraction mapping from the tangent space of the manifold of low rank tensors back to this manifold.
We provide first partial convergence results based on a tensor version of the restricted isometry property (TRIP)
of the measurement map. Moreover, an estimate of the number of measurements is provided
that ensures the TRIP of a given tensor rank with high probability for Gaussian  
measurement maps.
}

\section{Introduction} 

As outlined in the introductory chapter of this book, {\it compressed sensing} allows the recovery of (approximately) sparse vectors from
a small number of linear random measurements via efficient algorithms including $\ell_1$-minimization and iterative hard thresholding, see also
\cite{rauhut,elku12} for introductory material. This theory was later extended to the reconstruction of low rank matrices 
from random measurements in \cite{brecht,cata10,capl11,gr11}. 
An important special case includes the {\it matrix completion problem}, where one seeks to fill in missing 
entries of a low rank matrix \cite{care09,cata10,gr11,re12}. Corresponding algorithms include {\it nuclear norm minimization} \cite{brecht,cata10,fa02-2} and versions of {\it iterative hard thresholding} \cite{tanner}.

In the present article we pursue a further extension of compressed sensing. We consider
the recovery of a low rank tensor  
from a relatively  small number of measurements. In contrast to already existing work in this direction \cite{recht,goldfarb},
we will understand low rank tensors in the framework of
recently introduced  {\em hierarchical tensor formats} \cite{hackbusch}. 
This concept includes the classical  
{\em Tucker format}  \cite{tucker,kolda,hosvd} as well as {\em tensor trains} \cite{oseledets,oseledets1}. 
These hierarchical tensors can be represented in a data sparse way,  i.e., they require only a very low number 
of data for their representation compared to the dimension of the full tensor space. 

Let us recall the setup of low rank matrix recovery first.
Given a matrix $\mathbf{X} \in \mathbf{R}^{n_1 \times n_2}$
of rank at most $ r \ll  \mbox{ min } \{n_1,n_2 \}$, the goal
of  low rank matrix recovery  is to reconstruct 
$\mathbf{X}$ from linear measurements  $b_i =1, \ldots, m$, i.e., $\mathbf{b} = \mathcal{A} (\mathbf{ X})$, 
where $\mathcal{A}  : \mathbb{R}^{n_1\times n_2} \to 
 \mathbb{R}^m$  with $m \ll n_1 n_2$  is a linear sensing operator.

This problem setting can be  
transferred to the problem 
to  recover higher order tensors 
$\mathbf{ u} \in  \mathcal{H}_d := \mathbb{R}^{n_1 \times n_2 \times \cdots  \times n_d}$, 
$ \pmb{\mu} = (\mu_1,\ldots , \mu_d) \mapsto \mathbf{ u} (\mu_1,\ldots , \mu_d) $  from 
the linear measurements 
$\mathbf{b} = \mathcal{A} (\mathbf{u} )$, where 
$\mathcal{A} : \mathbb{R}^{n_1 \times n_2 \times \cdots \times n_d} \to \mathbb{R}^m$, is a sensing operator
$ m \ll n_1 n_2 \cdots  n_d$.
Here $ d$ denotes the order of the tensor (number of modes of a tensor) and we remark that, for easier readability, we use the notation $\mathbf{ u} (\mu_1,\ldots , \mu_d)$ referring
to the entries of the tensor.
In the present article we assume that the tensors to be reconstructed belong
to a class of hierarchical tensors of  a given  low multi-linear rank $\mathbf{ r} = (r_j)_{j=1}^p$, where $p$ depends on the specific tensor format \cite{dasilva}, see also below.
Of particular interest is the special case of {\em tensor completion},  where the measurement operator
samples entries of the tensor, i.e.,
$$ \big(
 \mathcal{A} 
  \mathbf{u} \big)_i
  =
 \mathbf{u} ({\pmb{\mu}_i })  = \mathbf{ u}  (\mu_{1,i} , \ldots , \mu_{d,i} ) = b_i \ \ , \ \ i=1, \ldots , m  \ ,  $$
where the $ \pmb{\mu}_i   \in \Omega$, $|\Omega| = m$,  are given 
(multi-)indices 
\cite{mohlenkamp,signoretto1,kressner}.
Tensors, even of high order $ d \gg 3$,  appear frequently in data and signal  analysis. 
 For example, a   video signal 
is a tensor of order $d=3$.   High order tensors of relatively low rank 
 may also arise in  vector-tensorization \cite{hackbusch-tensorisation,oseledets-qtt}
of a low dimensional signal. 
The present article tries to present a framework for  tensor recovery 
 from  the perspective of recent developments in  tensor product approximation
\cite{kolda,belmol,hackbusch},  in particular the development of hierarchical tensors \cite{hackbusch,oseledets}. 
The  {\em canonical format} (CANDECOMP, PARAFAC)  representing a tensor of order $d$ as a sum of 
elementary tensor products, or rank one tensors (see \cite{kolda,belmol})
\begin{eqnarray}
\mathbf{u} ( \mu_1 , \ldots , \mu_d ) & = & \sum_{k=1}^r 
\big( \mathbf{c}^1_k \otimes \cdots \otimes 
\mathbf{c}^d_k \big) ( \mu_1 , \ldots , \mu_d )\label{canonical:format} \\
&=& \sum_{k=1}^r \mathbf{c}^1_k  (\mu_1)  \cdots \mathbf{c}^d_k (\mu_d ) \ \ ,
\ \ \mu_i = 1, \ldots , n_i \ , \ i=1 , \ldots , d \ , \notag
\end{eqnarray}
with $ \mathbf{c}^i_k \in \mathbb{R}^{n_i}$, 
suffers from severe difficulties, unless $ d\leq 2$. 
For example, the tensor rank is not well defined,  and the set of tensors of the above form with fixed 
$r$ is not closed \cite{lim} and does not form an algebraic variety. 
However, we obtain a closed subset, if we impose further conditions. Typical examples for such
conditions are e.g.\ symmetry \cite{landsberg}  
or bounds $\left|\mathbf{c}_k^i\right| \leq \alpha$ for some fixed $\alpha$ \cite{usch}.

However, it has been experienced that computations within 
the {\em Tucker  tensor format}  behave relatively robust and stable, whereas the complexity unfortunately 
still suffers from the curse of dimensionality. 
A  first important  observation may be summarized in the 
fact that the set of Tucker tensors with a Tucker rank at most  $ \mathbf{r} =(r_1, \ldots , r_d)$ forms an algebraic variety, i.e., 
a set of common zeros of multi-variate polynomials.  
Recently developed {\em hierarchical tensors}, introduced by Hackbusch and coworkers (HT tensors) 
\cite{hackbusch-kuehn,lars} and
the group of Tyrtyshnikov (tensor trains, TT) \cite{oseledets,oseledets1} 
have extended the Tucker   
format \cite{tucker,kolda,hosvd} into a multi-level framework, that no longer suffers from high order scaling w.r.t.\ the order $d$, 
as long as the ranks are moderate. For $d=3$ there is no essential difference, whereas for larger $ d \geq 4$ one benefits from the use of the novel formats. 
This makes the Tucker format  \cite{signoretto1,goldfarb,yin} and, in  particular,  its hierarchical  generalization
\cite{dasilva,kressner,rss1},  the {\em hierarchical  tensor format}, a proper candidate for 
tensor product approximation in the sense that it serves as 
an appropriate model class in which  we would like to represent or approximate tensors of interest in a data sparse way. 
Several algorithms developed in compressed sensing and matrix recovery or matrix completion 
can be easily transferred to this tensor setting (with the exception of nuclear norm minimization, which poses some fundamental difficulties).
However, we already note at this point that the analysis of algorithms is much harder for tensors than for matrices as we will see below.

Historically, the hierarchical tensor framework 
has  evolved  in the quantum physics community 
hidden in the renormalization group ideas \cite{white}, 
and became clearly visible  in the framework of matrix product and tensor network states
\cite{schollwoeck}.  An
independent   source of these developments can be found   in quantum dynamics as the 
multi-layer {multiconfigurational time-dependent Hartree} {(MCTDH)} method \cite{mayer,wang,lubich-blau}. 
Only after the  recent  introduction  of hierarchical tensor representations  in numerics, 
namely  {H}ierarchical Tucker (HT) \cite{hackbusch,hackbusch-kuehn} 
and  {T}ensor {T}rains (TT) \cite{oseledets,oseledets1}, its relationship to already  existing 
concepts in quantum physics 
has  been  realized \cite{lrss}. 
We refer the interested reader to {the} recent survey articles \cite{hackbuschAN,grasedyck-kressner,lrss,hs} 
and the monograph \cite{hackbusch}. 
In the present paper we would like to provide a fairly self-contained introduction, and demonstrate how these concepts
can be applied for tensor recovery. 

There are several essential difficulties when passing from matrix to tensor recovery.
In matrix recovery, the original problem can be reformulated to finding the solution of the optimization problem
$$ 
\mbox{minimize  rank} ({\bf Z}) \mbox{ s.t. } \mathcal{A} ({\bf Z} ) = {\bf b}  \ , \ 
 {\bf Z} \in \mathbb{R}^{n_1 \times n_2},  
$$
i.e., to finding the matrix $ \mathbf{Z}$ with the lowest rank consistent with the measurements.
While this problem is NP-hard \cite{fa02-2}, 
it can be relaxed to the convex optimization problem of constrained nuclear norm minimization 
$$\mbox{ minimize  } \| {\bf Z} \|_{*}  \mbox{  s.t. } \mathcal{ A } ({\bf Z}) ={\bf b}   \ , \ 
 {\bf Z} \in \mathbb{R}^{n_1 \times n_2}  \ .$$
Here, the nuclear norm is the sum of the singular values $\sigma_j({\bf Z})$ of ${\bf Z}$, i.e., $\|{\bf Z}\|_* = \sum_j \sigma_j({\bf Z})$.
The minimizer of this problem reconstructs ${\bf X}$ exactly under suitable conditions on $\mathcal{A}$ \cite{brecht,capl11,care09,gr11,rauhut}.

In the hierarchical tensor setting, we are dealing with 
a rank tuple $\mathbf{r} = (r_1, \ldots , r_p)$, which we would like to minimize simultaneously. 
However, this is not the only difficulty arising from the non-linear 
tensor ansatz. In fact, the tensor nuclear norm is NP-hard to compute \cite{NPcomplete, Fried} and therefore,
tensor nuclear norm minimization is computationally prohibitive. 
Another difficulty arises because in contrast to the matrix case, also the best rank ${\mathbf r}$-approximation
to a given tensor is NP-hard to compute \cite{NPcomplete, Fried}.

Our model class of hierarchical tensors of fixed multi-linear rank $\mathbf{r}$
is a smooth embedded manifold, and its closure constitutes an algebraic variety.   
These are properties on which one can built local optimization methods \cite{absil,lrsv,su2}, 
subsumed under the moniker {\it Riemannian optimization}.
Moreover, for {\em hierarchical tensor representation}  
efficient numerical tools for finding at least a quasi-best approximation are available, namely
the  {\em higher order singular value decomposition (HOSVD)}, related to the Tucker model \cite{hosvd}, or the {\em hierarchical  singular value decomposition (HSVD)}, which is an extension of the HOSVD to hierarchical 
Tucker models \cite{oseledets2,hackbusch,lars,vidal}. All these methods proceed 
via successive computations of the SVD of certain matricisations of the original tensor. 

The HSVD (and the HOSVD as a special case) enables us to compute rank ${\mathbf{r}}$ approximations to a given
tensor via truncation of the decomposition. This allows to extend a particular class of  
greedy type algorithms, namely {\em iterative hard 
thresholding algorithms} to the present tensor setting. 
In a wider sense, this class of techniques includes also 
related Riemannian manifold techniques \cite{kressner,dasilva}  and 
alternating least squares methods \cite{hrs-als}. 
First numerical tests show promising results \cite{dasilva,kressner,rss1,rss2}. 
For a convergence analysis in the tensor case, and in applications, 
however, we have to struggle with more and harder difficulties than in the matrix case.
The most fundamental of these consists in the fact, that truncations of the HSVD only provide
quasi-best low rank approximations. Although bounds of the approximation error are known \cite{lars},
they are not good enough for our purposes, which is the main reason why we are only able to provide
partial convergence results in this chapter.
Another difficulty with iterative hard thresholding  algorithms is that the
rank, here a rank tuple $\mathbf{r} = ( r_1 , \ldots , r_p)$, has to be fixed a priori. 
In practice this rank tuple is not known in
advance, and a strategy for specifying appropriate ranks is required. 
Well known strategies borrowed from matrix recovery consist in increasing the rank during the approximation or starting with overestimating 
the rank and reduce the ranks through the iteration \cite{wutao}. 
For our seminal treatment, we simply assume that  the multi-linear rank $\mathbf{r} $  
is known in advance, i.e.,  the sought tensor $\mathbf{u}$  is of exact rank $\mathbf{r}$. 
Moreover, we assume noiseless measurements  
$ \big( \mathcal{A} \mathbf{u}\big)_j = b_j \in \mathbb{R}$, $j =1, \ldots , m$. 
The important issues of adapting ranks and obtaining robustness will be deferred to future research \cite{rss2}. 


\medskip

Our chapter is related to the one on {\em two algorithms for compressed sensing of sparse tensors} by 
S.~Friedland, Q.~Li, D.~Schonfeld and E.E.~Bernal. The latter chapter also considers the recovery of mode $d$-tensors from incomplete information using
efficient algorithms. However, in contrast to our chapter, the authors assume usual sparsity of the tensor instead of the tensor being of low rank. The tensor structure
is used in order to simplify the measurement process and to speed up the reconstruction rather than to work with the smallest possible number
of measurements and to exploit low-rankness.

\section{Hierarchical Tensors} 

\subsection{Tensor product spaces} 
We start with some preliminaries. 
In the sequel, we consider only the real field $\mathbb{K} = \mathbb{R}$, 
but most parts are easy to extend to the complex case  
as well.  We will  confine ourselves to finite dimensional linear spaces $ V_i = \mathbb{R}^{n_i} $ from which the  tensor product space 
$$ \mathcal{H}_d = \bigotimes_{i=1}^d V_i := \bigotimes_{i=1}^d \mathbb{R}^{n_i}  \ \  , $$
is built \cite{hackbusch}. 
If it is not stated explicitly, the 
$V_i = \mathbb{R}^{n_i} $ are supplied with  the canonical basis $ \{  \mathbf{e}^i_1 , \ldots, \mathbf{e}^i_{n_i}   \} $ of the vector space
$\mathbb{R}^{n_i} $. Then any $ \mathbf{u} \in \mathcal{H}_d$ can be represented as
\begin{eqnarray*}
 \mathbf{u}   & = & \sum_{\mu_1=1 }^{n_1}  \ldots \sum_{\mu_d = 1}^{n_d}  \mathbf{u}  (\mu_1 , \ldots , \mu_d ) \;
 \mathbf{e}^1_{\mu_1}  \otimes \cdots \otimes   \mathbf{e}^d_{\mu_d}   \ .
\end{eqnarray*}
Using this basis, with a slight abuse of notation,  we can  
identify $   \mathbf{u}   \in \mathcal{H}_d  $ with its representation by 
 a d-variate function, often called hyper matrix, 
$$ \pmb{\mu} = 
(\mu_1, \ldots , \mu_d ) \mapsto \mathbf{u}  (\mu_1, \ldots , \mu_d )  \in \mathbb{R} \ \ , \ \ 
\mu_i = 1, \ldots, n_i  \ , \ i=1, \ldots , d \ ,
$$
depending on discrete variables, usually called indices $\mu_i = 1, \ldots , n_i $, and $ \pmb{\mu} $ 
is called a multi-index. 
Of course, the actual representation  $\mathbf{u} ( \cdots ) $  of $\mathbf{u} \in \mathcal{H}_d$
depends on the chosen bases of $V_1, \ldots , V_d$. 
With $n= \max \{n_i : i=1, \ldots, d \}$, the number of possibly non-zero entries in the representation of $\mathbf{u} $ is $ n_1 \cdots n_d = \mathcal{O} (n^d) $.
This is often referred to as the {\em curse of dimensions}. 
We equip the linear space 
$\mathcal{H}_d $ with the   $\ell_2 $-norm $ \|\mathbf{u} \| = \sqrt{\langle  \mathbf{u} ,  \mathbf{u} \rangle}$ and {the} inner product 
$$
\langle  \mathbf{u}  ,  \mathbf{v}  \rangle := \sum_{\mu_1= 1}^{n_1}  \cdots \sum_{\mu_d =1}^{n_d}
  { \mathbf{u} (\mu_1, \ldots , \mu_d )}
 \mathbf{v} (\mu_1, \ldots , \mu_d )  \ . 
$$
We distinguish linear operators between vector spaces and their corresponding 
 representation by matrices, which are written  by capital bold letters $ \mathbf{U}$. 
Throughout this chapter, all tensor contractions or various tensor--tensor  products are either 
defined explicitly, by summation over corresponding indices,  or 
by introducing corresponding matricisations of the tensors and performing matrix--matrix products.

\subsection{Subspace approximation}

The essence of the classical Tucker format is that, given a tensor $\mathbf{u}$  and a rank-tuple $\mathbf{r} = (r_j)_{j=1}^d$,
one is searching  
for optimal subspaces $U_i \subset \mathbb{R}^{n_i}$ such that
\[
\min \| \mathbf{u }  - \mathbf{v}\|, \mbox{ where } \mathbf{v} \in U_1 \otimes \cdots \otimes U_d ,
\]
is minimized over $U_1,\hdots,U_d$ with $\dim U_i = r_i$. 
Equivalently, we are looking for corresponding  bases $\mathbf{b}^i_{k_i}$ of $U_i$, which can be written in the form 
 \begin{equation} \label{leaftransfer}
  \mathbf{b}^i_{k_i} : =  \sum_{\mu_i =1}^{n_i}   \mathbf{b}^i  ( \mu_i, k_i ) \mathbf{e}^i_{\mu_i}  \ \ , \ \ 
 \ k_i = 1 ,\ldots , r_i  < n_i \ , 
\end{equation}
where $\mathbf{b}^i  (k_i , \mu_i ) \in \mathbb{R}$, 
for each  coordinate direction  
 $i=1,\ldots,d$. With a slight abuse of notation  we often   identify  the basis vector with its representation
 $$
 \mathbf{b}^i_{k_i} \simeq \big(    \mu_i \mapsto  \mathbf{b}^i  ( \mu_i, k_i )  \big) �\ , \ \mu_i = 1, \ldots, n_i \ , \ 
  \ k_i = 1 ,\ldots , r_i \ ,
 $$
 i.e.,  a discrete function or  an  $n_i$-tuple. 
This concept of subspace approximation can be used either  for an  approximation 
$\mathbf{u}$ of a single  tensor,  as well as for  an ensemble of  
tensors $ \mathbf{u}_{j}   $, $j  =1, \dots, m$, in tensor product spaces. 
Given the bases $ \mathbf{b}^i_{k_i} $, 
$ \mathbf{u}_j $ can be represented by  
\begin{eqnarray}
\mathbf{u}_j   = \sum_{k_1 = 1}^{r_1}  \ldots \sum_{k_d =1}^{r_d}    \mathbf{c}   (j, k_1, \ldots , k_d  )
  {\bf b}^1_{k_1}  \otimes \cdots  \otimes  {\bf b}^d_{k_d}   \in
 \bigotimes_{i=1}^d  U_i \subset   \mathcal{H}_d = \bigotimes_{i=1}^d {\mathbb{R}}^{n_i}  \ . \label{eq:tucker}
\end{eqnarray}
In case $\mathbf{b}^i$'s form orthonormal bases, the core tensor $\mathbf{c}  \in \mathbb{R}^m \otimes \bigotimes_{i=1}^d \mathbb{R}^{r_i} $ is given entry-wise by 
\begin{equation*} 
\mathbf{c} (j, k_1, \ldots , k_d  )  = \langle \mathbf{u}_j  ,  \mathbf{b}^1_{k_1}  \otimes \cdots \otimes �\mathbf{b}^d_{k_d} \rangle   
   \ . 
\end{equation*}  
We call a representation of the form (\ref{eq:tucker}) with some $ \mathbf{b}_{k_i}^i , \mathbf{c} $ 
a Tucker representation, and the  Tucker representations the Tucker format. In this formal parametrization, the upper limit of the sums may be larger than the ranks 
and $\{\mathbf{b}_{k_i}^i\}_{k_i}$ may not be linearly independent. Noticing that a Tucker representation of a 
tensor is not uniquely defined, we are interested in some normal form.

Since the core tensor contains $r_1 \cdots r_d \sim r^d $, $r := \max \{ r_i : i=1 , \ldots, d \}$,
possibly nonzero entries, this concept does not prevent the number of free parameters
from scaling exponentially with the dimensions $ \mathcal{O} (r^d)$. Setting  $n := \max \{ n_i : i=1, \ldots , d\}$, 
the overall complexity for storing the required data (including the basis vectors) is bounded by $\mathcal{O} ( ndr + r^d) $.
Since  $n_i$ is replaced by $r_i$, one obtains a dramatical compression 
$ \frac{r_1 }{n_1}  \cdots \frac{r_d }{n_d} \sim  \big( \frac{r }{n} \big)^d$.  
Without further sparsity of the core tensors the Tucker format is appropriate for low order tensors
$ d < 4$.

\subsection{Hierarchical tensor representation}  

The \emph{hierarchical Tucker format} (HT)  in the form introduced by Hackbusch and K\"uhn in \cite{hackbusch-kuehn},  
extends the idea of subspace approximation to a hierarchical or multi-level framework. 
Let us  proceed   in a hierarchical way. 
We first consider 
$ V_1 \otimes V_2 = \mathbb{R}^{n_1} \otimes \mathbb{R}^{n_2}$ or preferably the subspaces 
$U_1 \otimes U_2$ introduced in the previous section.
For the approximation of $\mathbf{u} \in \mathcal{H}_d$ we 
only need a subspace $U_{\{ 1,2 \} } \subset  U_1 \otimes U_2 $ with dimension $r_{\{1,2\}}  < r_1  r_2$. 
Indeed, $V_{\{ 1,2\}} $  is defined through a new basis 
\begin{eqnarray*} 
V_{\{ 1,2\}}  & = & \mbox{span } 
\{  \mathbf{b}^{\{ 1,2 \} }_{k_{ \{ 1,2 \}}}  :  k_{ \{ 1,2\} }  = 1, \ldots , r_{\{ 1,2\} }  \}  \ ,
\end{eqnarray*}
with basis vectors   given by 
$$
  \mathbf{b}^{\{ 1,2 \} }_{k_{ \{ 1,2 \}}}  = \sum_{{k}_1 =1 }^{r_1} \sum_{{k}_2 =1}^{r_2}
  \mathbf{b}^{\{ 1,2 \} } ( k_1 , k_2 ,  k_{\{ 1,2 \} }  )  \;  {\bf b}^1_{k_1}   \otimes  {\bf b}^2_{k_2 } , \quad k_{\{ 1,2 \} }  =1,\hdots,r_{\{ 1,2\} }. 
$$
One may continue in several ways, e.g.~by building a subspace  
$U_{\{ 1,2,3\} } \subset  U_{\{ 1,2\} } \otimes U_3  \subset U_1 \otimes U_2 \otimes U_3 \subset 
V_1 \otimes V_2 \otimes V_3 $, or 
  $U_{\{ 1,2,3,4\} } \subset  U_{\{ 1,2\} } \otimes U_{\{3,4\}}  $, where $U_{\{3,4\}} $ is defined analogously to $U_{\{1,2\}} $
   and so on.  

For a systematic treatment, this approach can be cast into the framework of a partition tree, 
with leaves $ \{1\}, \ldots \{d\} $, 
simply abbreviated here  by $1,\ldots,d$, 
and vertices  $ \alpha  \subset D:= \{ 1, \ldots , d\}$,   
corresponding to the partition $ \alpha = \alpha_1 \cup \alpha_2 $ , $ \alpha_1 \cap \alpha_2  = \emptyset$. Without loss of generality, we can assume that $i <j$, for all $i \in \alpha_1$, $j \in \alpha_2$. We call $\alpha_1, \alpha_2 $ the {\em sons} of the {\em father}  $\alpha$ and
$D$ is called the {\em root} of the tree. 
In the example above we have $ \alpha  : = \{ 1,2,3 \} = \alpha_1 \cup \alpha_2 = \{ 1,2\} \cup \{ 3\}$, 
where $\alpha_1 :=  \{ 1,2 \} $ {and} $ \alpha_2 : = \{ 3 \}  $. 

In general, we do not need to restrict the number of sons, and define the {\em coordination number} 
by the number of sons $+1$ (for the father).
Restricting to a binary tree so that each node contains two sons for non-leaf nodes (i.e. $\alpha \not= \{ i \} $),
 is often the common choice, which we will also consider here.    
Let $ \alpha_1 , \alpha_2  \subset D $ be the two sons of $ \alpha \subset D  $,  
then  $ U_{\alpha } \subset U_{\alpha_1} \otimes U_{\alpha_2} $, is defined by a  basis 
 \begin{equation} \label{eq:transfer} 
  \mathbf{b}^{\alpha}_{\ell}   = \sum_{i=1 }^{r_{\alpha_1} } \sum_{j=1}^{r_{\alpha_2}} 
      \mathbf{b}^{\alpha} (i,j ,  \ell   )  \mathbf{b}^{\alpha_1 }_i  \otimes  \mathbf{b}^{\alpha_2}_j  \ .  
\end{equation}
They can also be considered as matrices 
$ ( \mu_{\alpha}, \ell ) \mapsto  \mathbf{B}^{\alpha} (\mu_{\alpha} , \ell )  \in  \mathbb{R}^{n_{\alpha} \times r_{\alpha}}$ with $n_{\alpha}=\prod_{\ell \in \alpha} n_{\ell}$, for $\alpha \neq \{i\}$.
Without loss of generality, all basis vectors, e.g.~$\{ \mathbf{b}^{\alpha }_{\ell}: \ell=1,\ldots,r_{\alpha}   \} $,
   can be constructed to be orthonormal, as long as $\alpha \not= D$ is not the root.
The tensors $(\ell , i,j) \mapsto \mathbf{b}^{\alpha} (i,j , \ell ) $ will be called {\em transfer} or {\em component tensors}. For a leaf $\{i\} \simeq i$, the tensor $(\mu_i,k_i)\mapsto \mathbf{b}^i(\mu_i,k_i)$ in \eqref{leaftransfer} denotes a {\em transfer} or {\em component} tensor.
The component tensor $ \mathbf{b}^{D} = \mathbf{b}^{ \{1,  \ldots, d\} } $  at the root   is called the {\em root tensor}.

Since the matrices $\mathbf{B}^{\alpha}$ are too large, we avoid computing them. We store
only the {\em transfer} or {\em component tensors} which, for fixed $\ell = 1, \ldots, r_{\alpha}$, can also be casted into
{\em transfer matrices} $ (i,j) \mapsto \mathbf{B}_{\alpha}  (\ell)_{i,j} \in \mathbb{R}^{r_{\alpha_1}  \times r_{\alpha_2} }  $. 

\begin{proposition}[\cite{hackbusch}]
A tensor $ \mathbf{u} \in \mathcal{H}_d $ is completely parametrized by the transfer tensors
$\mathbf{b}^{\alpha} $, $ \alpha \in \mathbb{T} $, i.e., by a multi-linear  function $\tau $ 
\begin{equation*} 
\big(  \mathbf{b}^{\alpha} \big)_{ \alpha \in \mathbb{T}} \mapsto
\mathbf{u} = \tau \big( \{ \mathbf{b}^{\alpha} : \alpha \in \mathbb{T} \}  \big) \ . 
\end{equation*}
\end{proposition}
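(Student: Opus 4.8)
The plan is to proceed inductively along the partition tree $\mathbb{T}$, showing that the contraction of the transfer tensors $\{\mathbf{b}^\alpha : \alpha \in \mathbb{T}\}$ produces, for each vertex $\alpha$, a well-defined family of vectors $\{\mathbf{b}^\alpha_\ell : \ell = 1, \ldots, r_\alpha\}$ in $\bigotimes_{i \in \alpha} V_i$, and that at the root this yields $\mathbf{u}$. First I would set up the statement of what $\tau$ actually computes: reading the defining relations \eqref{leaftransfer} and \eqref{eq:transfer} not as constraints on pre-existing subspaces but as an \emph{assignment}, i.e.\ given the arrays $\mathbf{b}^i(\mu_i,k_i)$ at the leaves and $\mathbf{b}^\alpha(i,j,\ell)$ at the interior nodes and root, one defines $\mathbf{b}^\alpha_\ell$ recursively by the very same formulas. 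This makes $\tau$ the composition of the bilinear contraction maps attached to the edges of the tree.

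The induction runs from the leaves upward. Base case: for a leaf $\{i\}$, formula \eqref{leaftransfer} directly defines $r_i$ vectors in $V_i = \mathbb{R}^{n_i}$ from the array $\mathbf{b}^i$; this assignment is linear in $\mathbf{b}^i$. Inductive step: suppose at the sons $\alpha_1, \alpha_2$ of $\alpha$ we have already constructed $\{\mathbf{b}^{\alpha_1}_i\}_{i=1}^{r_{\alpha_1}} \subset \bigotimes_{k\in\alpha_1} V_k$ and $\{\mathbf{b}^{\alpha_2}_j\}_{j=1}^{r_{\alpha_2}} \subset \bigotimes_{k\in\alpha_2}V_k$, each depending multi-linearly on the transfer tensors of the corresponding subtree. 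Then \eqref{eq:transfer} defines $\{\mathbf{b}^\alpha_\ell\}_{\ell=1}^{r_\alpha} \subset \bigotimes_{k\in\alpha}V_k$ (using $\alpha = \alpha_1 \dot\cup \alpha_2$ and the natural isomorphism of the tensor product spaces), and the map $(\mathbf{b}^{\alpha_1}_\bullet, \mathbf{b}^{\alpha_2}_\bullet, \mathbf{b}^\alpha) \mapsto \mathbf{b}^\alpha_\bullet$ is bilinear in the first two slots and linear in the third. Composing these multi-linear maps along the tree shows the dependence of $\mathbf{b}^\alpha_\bullet$ on $\{\mathbf{b}^\beta : \beta \subseteq \alpha\}$ is multi-linear, which gives the multi-linearity claim for $\tau$ once we reach the root $D$, where $r_D = 1$ and $\mathbf{b}^D_1 = \mathbf{u} \in \bigotimes_{i=1}^d V_i = \mathcal{H}_d$. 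Finally, to see that \emph{every} $\mathbf{u} \in \mathcal{H}_d$ arises this way — i.e.\ that $\tau$ is surjective and hence the parametrization is complete — I would take $U_\alpha := \bigotimes_{i\in\alpha} V_i$ itself (so $r_\alpha = n_\alpha = \prod_{i\in\alpha} n_i$), let the leaf transfer tensors be identity matrices, let the interior transfer tensors realize the canonical isomorphisms $\bigotimes_{i\in\alpha_1}V_i \otimes \bigotimes_{i\in\alpha_2}V_i \cong \bigotimes_{i\in\alpha}V_i$, and let the root tensor carry the coordinates of $\mathbf{u}$ in the induced basis of $\mathcal{H}_d$; unwinding the recursion reproduces $\mathbf{u}$. (For the stated ranks $\mathbf{r}$ one instead fixes the subspaces $U_\alpha$ as in the preceding subsections and appeals to the matricisation/HSVD construction already described, but for mere parametrizability of an arbitrary tensor the full-rank choice suffices.)

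The main obstacle, and the only genuinely non-trivial bookkeeping, is the consistent handling of the index orderings and the tensor-product reassociations: one must verify that the recursive substitution of \eqref{eq:transfer} into itself down to the leaves actually yields a tensor in $\bigotimes_{i=1}^d V_i$ with the modes in the correct order $1, \ldots, d$ — this is where the convention $i < j$ for $i \in \alpha_1$, $j \in \alpha_2$ is used — and that the associativity of $\otimes$ makes the result independent of the bracketing imposed by the particular tree. This is conceptually routine but notationally delicate; I would phrase it cleanly by fixing, for each $\alpha$, the lexicographic identification $\bigotimes_{i\in\alpha}V_i \cong \mathbb{R}^{n_\alpha}$ with $n_\alpha = \prod_{i\in\alpha}n_i$, and checking that \eqref{eq:transfer} respects these identifications under $\alpha = \alpha_1 \dot\cup \alpha_2$. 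Everything else — linearity in each transfer tensor, the base case, and the surjectivity witness — is immediate from the definitions.
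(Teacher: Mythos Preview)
Your approach is correct and matches the paper's: the paper's entire justification is the two-sentence remark that $\tau$ is defined by applying \eqref{eq:transfer} recursively and that bilinearity of each $\mathbf{b}^\alpha$ in $\mathbf{b}^{\alpha_1}, \mathbf{b}^{\alpha_2}$ makes the composite $\tau$ multi-linear. Your proposal carries out this recursion carefully as a leaves-to-root induction and additionally supplies the surjectivity argument and the index-ordering bookkeeping, both of which the paper omits; these are welcome details, but the core idea is the same.
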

Indeed $\tau$ is defined  by applying
(\ref{eq:transfer}) recursively. Since $\mathbf{b}^{\alpha} $ depends bi-linearly on  $\mathbf{b}^{\alpha_1} $ and 
$\mathbf{b}^{\alpha_2}$, the composite function $ \tau $ is multi-linear in its arguments $ \mathbf{b}^{\alpha}$. 

\begin{center}
\includegraphics{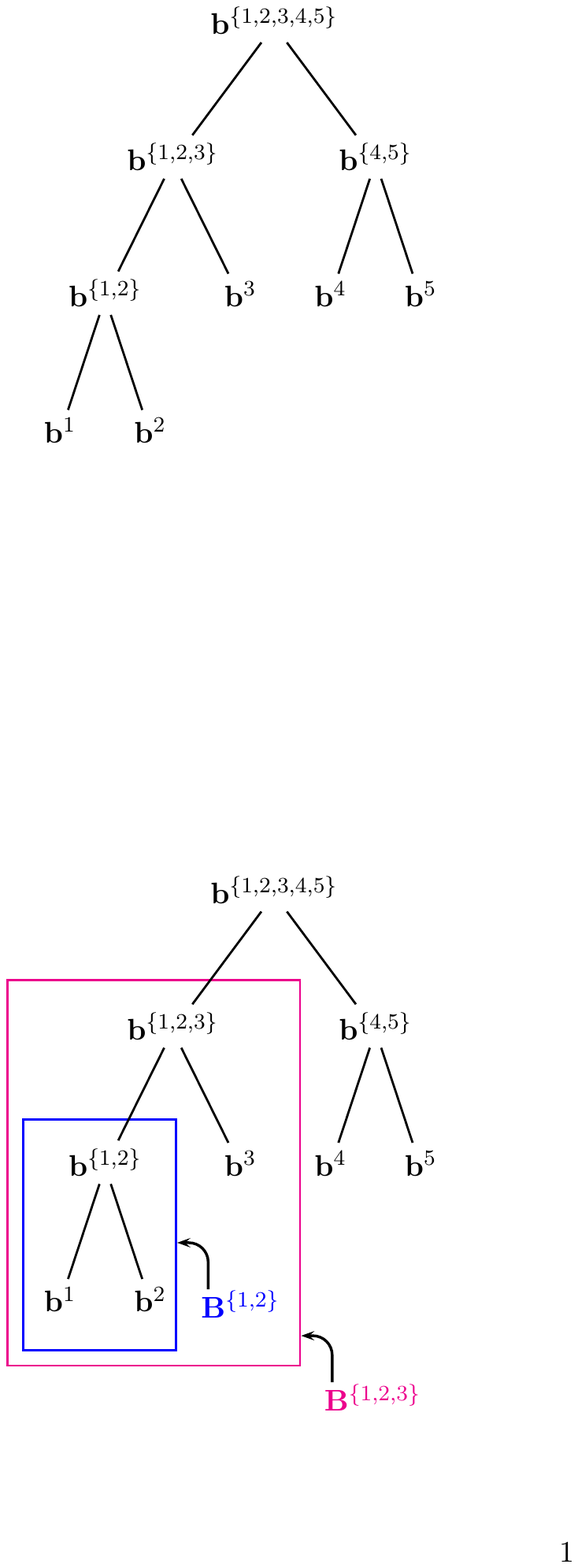} \\
{Hierarchical Tensor representation of an order $5$ tensor
 }
\label{fig:hierarchical}
\end{center}

{\bf Data complexity} Let $ n :=  \max \{n_i:i=1 , \ldots , d\} $, $r := \max \{� r_{\alpha} : \alpha \in \mathbb{T} \}$. 
Then the number of data required for the representation is 
$\mathcal{O} (ndr +  d r^3) $, in particular does not scale exponentially w.r.t. the order $d$.

\subsection{Tensor trains and matrix product representation} 

We now highlight another particular case of hierarchical tensor representations, namely  {\em Tyrtyshnikov tensors (TT)} or {\em tensor trains} 
and {\em  matrix product representations}  
 defined by  taking $ U_{ \{ 1, \ldots , p +1 \}} \subset U_{\{ 1, \ldots , p \} } \otimes V_{\{p+1\}} $,  
 developed as TT tensors (tensor trains) by \cite{oseledets1,oseledets2} 
  and known as  matrix product states (MPS) in physics. 
Therein, we abbreviate $ i \simeq \{ 1, \ldots , i\} $ and consider the unbalanced tree 
$\mathbb{T} = \{  \{ 1\} , \{2\},\{ 1,2\} , \{3\}, \{1,2,3\} ,\ldots ,\{d\},\{ 1, \ldots, d\}\} $ and setting $r_0=r_d=1$.
The  transfer tensor $\mathbf{b}^{\alpha}$ for a leaf $ \alpha \in \{\{2\}, \{3\},\ldots, \{d\}\}$ is usually defined as identity matrix of appropriate size
and therefore the tensor $\mathbf{u} \in \mathcal{H}_d$ is completely parametrized by transfer tensors $\left(\mathbf{b}^{\alpha}\right)_{\alpha \in \mathbb{T}'}$, where $\mathbb{T}'= \{ 1, \ldots , d\}=\{\{1\},\{1,2\},\ldots$, $\{1,2,\ldots,d\}\}$. 
Applying the recursive construction,  the tensor $\mathbf{u}$  can be written as
\begin{eqnarray}
 ( \mu_1, \ldots , \mu_d )  & \mapsto & \mathbf{u} ( \mu_1, \ldots , \mu_d ) \notag  \\ 
 & = & \sum_{k_1=1}^{r_1} \ldots\sum_{k_{d-1}=1}^{r_{d-1}} 
  {\mathbf{b}^1 ( \mu_1 , k_1) \mathbf{b}^2 (k_1,\mu_2 ,k_2) \cdots  
 \mathbf{b}^d (k_{d-1},\mu_d )} \ .  \label{eq:utt} 
\end{eqnarray}
  Introducing   the 
  matrices 
$  \mathbf{B}_i (\mu_i) \in \mathbb{R}^{r_{i-1} \times r_{i} }  $,
$$
\big( \mathbf{B}_i (\mu_i) \big)_{k_{i-1},k_i } = \mathbf{b}^i  ( k_{i-1} , \mu_i , k_i  ) \ , \ 1 \leq i \leq d \ , 
$$
and with the convention 
$r_0 =r_d =1$   
$$
\big( \mathbf{B}_1 (\mu_1) \big)_{k_1 }^* = \mathbf{b}^1 ( \mu_1 , k_1 ) \  \ \text{and} \ \  
\big( \mathbf{B}_d (\mu_d) \big)_{k_{d-1} } = \mathbf{b}^d (k_{d-1}, \mu_d ) \ ,
$$
 the formula (\ref{eq:utt}) can 
 be rewritten entry-wise  by
matrix--matrix products
\begin{equation}
\label{eq:mps} 
  \mathbf{u} (\mu_1, \ldots, \mu_d )  = {\mathbf{B}_{1} (\mu_{1})  
\cdots \mathbf{B}_{i} (\mu_{i}) \cdots \mathbf{B}_{d} (\mu_{d})} = \tau ( \mathbf{b}^1 , \ldots ,  \mathbf{b}^{d} ) 
\ . 
\end{equation}
This representation is by no means unique. In general, there exist  
 $ \mathbf{b}^{\alpha} \neq \mathbf{c}^{\alpha} $ such that $ \tau ( \{  \mathbf{b}^{\alpha}   : \alpha \in \mathbb{T}\}   ) = 
 \tau ( \{  {\mathbf{c}}^{\alpha}   : \alpha \in \mathbb{T}  \} )$. 
 
 \begin{center}
\includegraphics{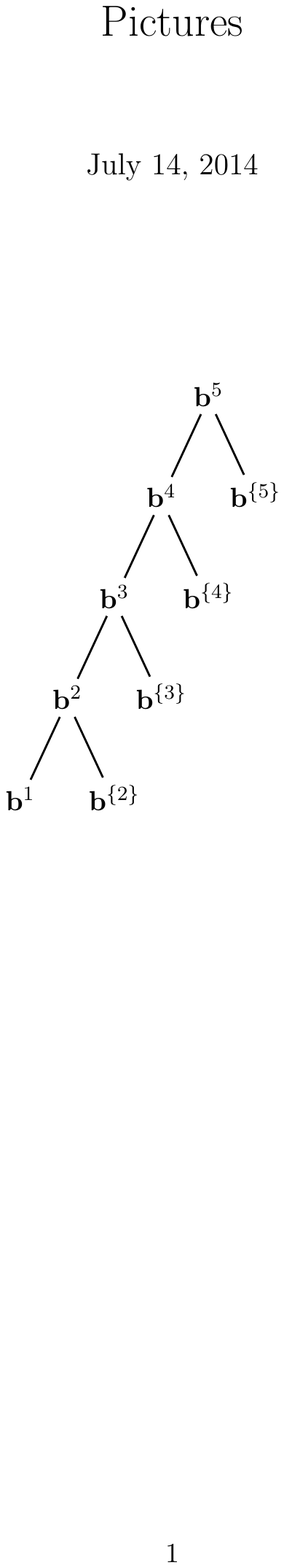} \\
{TT representation of an order $5$ tensor with abbreviation $i \simeq \{1,\ldots,i\}$
 }
\label{fig:tt}
\end{center}

The tree is ordered according to the father-son relation into a hierarchy of levels, where $\mathbf{b}^d$ is the  root tensor. 
Let us observe that we can rearrange the hierarchy in such a way that any node
 $ p =1 , \ldots ,  {d}$ can form  the root of the tree,   i.e., $  \mathbf{b}^{p}  $ becomes  the root tensor. 
Using only orthogonal basis vectors,
which is the preferred choice,  this ordering reflects left and right hand orthogonalization in matrix product states
\cite{hrs-tt}. 
 
A tensor in canonical form 
$$ \mathbf{u} = \sum_{k=1}^{R} \mathbf{u}_k^1 \otimes \cdots \otimes \mathbf{u}_k^d $$
can be easily written in the TT form, by setting $r_i=R$, for all $i=1,\ldots,d-1$ and
$$ \mathbf{b}^i ( k_{i-1} , \mu_i , k_i ) =
\begin{cases}
\begin{array}{cl}
 \mathbf{u}^i_k (\mu_i ) \   &   \mbox{if }  k_{i-1} = k_i = k ,\,  i=2 , \ldots, d -1 \,  \\
0   &    \mbox{if }  k_{i-1} \not= k_i  , \, i=2 , \ldots, d -1 \, \\
 \mathbf{u}^i_k (\mu_i )  &   \mbox{if }k_i=k, \, i=1  \, \\
 \mathbf{u}^i_k (\mu_i )  &   \mbox{if }k_{i-1}=k, \, i=d  \,
\end{array}
\end{cases}
 \ . $$
{\bf Data complexity:} Let $ n :=  \max \{n_i:i=1 , \ldots , d\} $, $r := \max \{� r_{j} : j=1, \ldots, d-1  \}$. Then the number of data required for the presentation is 
$\mathcal{O} (d n r^2) $. 
Computing
a single  entry of a tensor requires the matrix  multiplication of $d$
matrices  of size at most $r\times r$. This can be performed in
$\mathcal{O} (ndr^3)$
operations.

 Since the parametrization $\tau $ can be written in the simple matrix product form (\ref{eq:mps}),
we will consider   the  TT format often  as a prototype model, and use it  frequently for  our explanations.
We remark that most properties can easily be extended to the general  hierarchical case 
with straightforward modifications \cite{hackbusch}, and we leave those modifications to the interested reader.

\subsection{Matricisation  
of a tensor and its multi-linear rank}

Let $\mathbf{u}$ be a tensor in $ \mathcal{H}_d$.
Given a fixed dimension tree $ \mathbb{T} $,  
for each  node $\alpha \in\mathbb{T}$, $\alpha \not= D$, we can build  
a  matrix $ \mathbf{U}^{\alpha}$ from $\mathbf{u}$ by grouping the  indices $\mu_i $, with $i\in \alpha$  into a row index $I$ and the remaining indices  $\mu_j$  with $ j \in D \backslash \alpha$
into the column index $J$ of the matrix 
 $ \mathbf{U}^{\alpha} = \big( \mathbf{U}^{\alpha}_{I,J}  \big) $. 
 For the root $\alpha =D$ we simply take the vectorized tensor $ \mathbf{U}^D \in \mathbb{R}^{n_1\cdots n_d \times 1}$.
 Since the rank of this matrix is one, it is often omitted. 

For example, in the Tucker case,  for  $\alpha =\{ i\} $ being a leaf,  
we set $I = \mu_i $
and $J= (\mu_1 , \ldots ,  \mu_{i-1} , \mu_{i+1},  \ldots , \mu_d)$ providing a matrix  
$$ \mathbf{U}^{\alpha}_{ \mu_i ; (\mu_1 , \ldots , \mu_{i-1}, 
\mu_{i+1},  \ldots , \mu_d) } = \mathbf{u} (   \mu_1 , \ldots, \mu_d) \ .  $$
Similar, in the TT-format, with the convention that $i\simeq \{1,\ldots,i\}$, we obtain matrices $\mathbf{U}^i \in \mathbb{R}^{n_1 \cdots n_i \times n_{i+1}\cdots n_d}$ with entries 
$$ \mathbf{U}^i_{ (\mu_1, \ldots, \mu_i); (\mu_{i+1} , \ldots, \mu_d)} = \mathbf{u} (   \mu_1 , \ldots, \mu_d)  \ . $$

\begin{definition}
Given a dimension tree $\mathbb{T}$ with $p$ nodes, we define the {\em multi-linear  rank} by the $p$-tuple 
$\mathbf{r} = ( r_{\alpha } )_{\alpha \in \mathbb{T}} $ with $r_\alpha = \operatorname{rank}( \mathbf{U}^{\alpha})$,  $ \alpha \in \mathbb{T} $.   
The set of tensors $\mathbf{u} \in \mathcal{H}_d$ of given multi-linear rank $\mathbf{r}$
will be denoted by $ \mathcal{M}_{\mathbf{r}} $. 
The set of all tensors of rank $\mathbf{s} $ at most $\mathbf{r}$,  i.e., $s_{\alpha } \leq r_{\alpha} $ for all $\alpha \in \mathbb{T}$
will be denoted by $  \mathcal{M}_{\leq \mathbf{r}} $. 
\end{definition} 

Unlike the matrix case, it is possible that for some tuples $\mathbf{r}$, $\mathcal{M}_{\mathbf{r}} = \emptyset$ \cite{Carlini}. However, since our algorithm works on a closed nonempty set $\mathcal{M}_{\leq \mathbf{r}}$, this issue does not concern us. 

In contrast to the canonical format \eqref{canonical:format}, also known as CANDECOMP/PARAFAC, see \cite{LimSilva,kolda} and the border rank problem 
\cite{landsberg}, in the present setting the rank is a well defined quantity.  
This fact makes the present concept highly attractive for tensor recovery. 
On the other hand, if a tensor $\mathbf{u}$ is of rank $\mathbf{r}$ then there exists a component tensor 
$\mathbf{b}^{\alpha} $ of the form (\ref{eq:transfer}) where  $ \ell = 1, \ldots, r_{\alpha}$. 

It is well  known  that the set of all matrices $ \mathbf{A} \in \mathbb{R}^{n\times m} $ of rank at most  $r$ is a set of common zeros of multi-variate polynomials, i.e., an algebraic variety. 
The set  $\mathcal{M}_{\leq \mathbf{r}} $ is the set of all tensors $ \mathbf{u} \in \mathcal{H}_d$,  
where the matrices $ \mathbf{U}^{\alpha} $ have a rank at most $r_{\alpha}$. Therefore, it is again a set of common zeros of multivariate polynomials.

\subsection{Higher order singular value decomposition}

Let us provide more details about the rather classical higher order singular value decomposition.
Above we have considered only binary dimension trees $\mathbb{T}$, but we can extend the considerations also to
$N$-ary trees with $N \geq 3$. The $d$-ary tree $\mathbb{T}$ (the tree with a root with $d$ sons $i \simeq \{ i\}$) induces the so-called Tucker decomposition and
the corresponding higher order singular value decomposition (HOSVD).
The Tucker decomposition was first  introduced by Tucker in $1963$ \cite{Tucker} and has been refined later on in many works, see e.g. \cite{levin, Tucker2, Tucker}. 
 \begin{definition}[Tucker decomposition]
Given a tensor $\mathbf{u} \in \mathbb{R}^{n_1  \times \cdots \times n_d}$, the decomposition

\begin{equation*}
\mathbf{u}\left( \mu_1,\ldots, \mu_d \right)=\sum_{k_1=1}^{r_1}\ldots \sum_{k_d=1}^{r_d}\mathbf{c}\left( k_1,\ldots, k_d \right) 
\mathbf{b}^1_{k_1} \left( \mu_1 \right)  \cdots \mathbf{b}^d_{k_d} \left( \mu_d \right),
\end{equation*}
$ r_i \leq n_i$, $ i=1, \ldots , d$,  is called a Tucker decomposition. The tensor $\mathbf{c} \in \mathbb{R}^{r_1 \times \cdots \times r_d}$ is called the core tensor and the
$\mathbf{b}^i_{k_i}  \in \mathbb{R}^{n_i }$, for $i=1,\ldots,d$, form a basis of the subspace $U_i \subset \mathbb{R}^{n_i}$. They can also be considered as transfer or component tensors
$ ( \mu_i, k_i ) \mapsto  \mathbf{b}^i (\mu_i , k_i )  \in  \mathbb{R}^{n_i \times r_i}$.
\end{definition}

  \begin{center}
\includegraphics{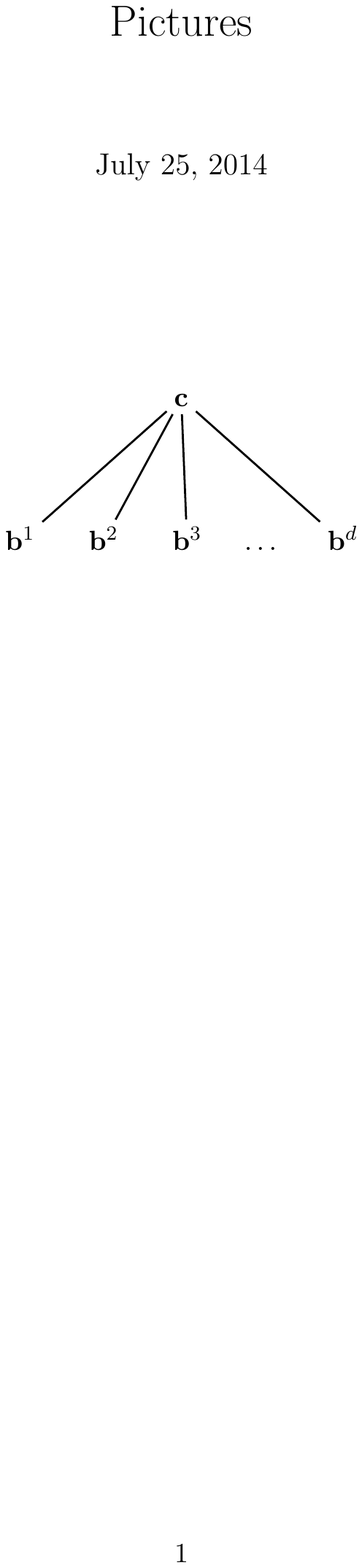} \\
{Tucker representation of an order $d$ tensor }
\label{fig:tt2}
\end{center}

   Notice that the Tucker decomposition is highly non-unique. For an $i \in \{1,\ldots,d\}$ and invertible matrix $\mathbf{Q}_i \in \mathbb{R}^{r_i \times r_i}$, one can define a matrix $\overline{\mathbf{B}}^i=\mathbf{B}^i \mathbf{Q}_i$ and the tensor $\overline{\mathbf{c}}_i$
\begin{equation*}
\overline{\mathbf{c}}_i\left( k_1,\ldots,k_d\right)=\sum_{\overline{k}_i=1}^{r_i  }   \mathbf{c}_i\left( k_1,\ldots,\overline{k}_i,\ldots k_d\right)
 \mathbf{Q}_i^{-1}\left(\overline{k}_i,k_i\right)
\end{equation*}
such that the tensor $\mathbf{u}$ can also be written as
\begin{equation*}
\mathbf{u}\left( \mu_1,\ldots, \mu_d \right)=\sum_{k_1=1}^{r_1}\ldots \sum_{k _d=1}^{r_d}\overline{\mathbf{c}}_i\left( k_1,\ldots, k_d \right) 
\mathbf{b}^1_{k_1} \left( \mu_1 \right) \cdots \overline{\mathbf{b}}^i_{k_i} \left( \mu_i \right) \cdots  \mathbf{b}^d_{k_d} \left( \mu_d \right).
\end{equation*}
 Similarly to the matrix case and the singular value decomposition, one can impose orthogonality conditions on the matrices $\mathbf{B}^i$, 
 for all $i=1,\ldots,d$, i.e., we assume that $ \{ \mathbf{b}^i_{k_i} : k_i = 1 , \ldots , r_i \}$ are orthonormal bases. 
 However, in this case one does not obtain a super-diagonal core tensor $\mathbf{c}$.

\begin{definition}[HOSVD decomposition]
The HOSVD decomposition of a given tensor $\mathbf{u} \in \mathcal{H}_d$ is a special case
of the Tucker decomposition where
\begin{itemize}
\item the bases $\{ \mathbf{b}^i_{k_i}  \in \mathbb{R}^{n_i } :  k_i =1 , \ldots , r_i\}  $ are orthogonal and normalized, for all $i=1,\ldots,d$,
\item the tensor $\mathbf{c} \in \mathcal{H}_d$ is all orthogonal, i.e., $\langle \mathbf{c}_{k_i=p},\mathbf{c}_{k_i=q}\rangle=0$, for all $i=1,\ldots,d$ and whenever $p \neq q$,
\item the subtensors of the core tensor $\mathbf{c}$ are ordered according to their  $\ell_2$ norm, i.e.,
$\left\|\mathbf{c}_{k_i=1} \right\|\geq \left\|\mathbf{c}_{k_i=2}\right\| \geq \cdots \geq \left\|\mathbf{c}_{k_i=n_i}\right\| \geq 0$.
\end{itemize}
\end{definition}
Here, the subtensor $\mathbf{c}_{k_i=p} \in \mathbb{R}^{n_1 \times \cdots \times n_{i-1} \times n_{i+1} \times \cdots \times n_d}$ is a tensor of order $d-1$ defined as
\begin{equation*}
\mathbf{c}_{k_i=p}\left( \mu_1,\ldots,\mu_{i-1},\mu_{i+1},\ldots,\mu_d\right)= \mathbf{c} \left( \mu_1, \ldots, \mu_{i-1}, p, \mu_{i+1}, \ldots ,\mu_d\right).
 \end{equation*}
 
The HOSVD can be computed via successive SVDs of appropriate unfoldings or matricisations $\mathbf{U}^{\{ i\} } = \mathbf{U}^i $, 
see e.g. \cite{kolda} and below for the more general HSVD.
For more information on this decomposition, we refer the interested reader to \cite{hosvd,hackbusch}.
 
 \subsection{Hierarchical singular value decomposition and truncation} 
 
 The singular value decomposition of the matricisation 
$ \mathbf{U}^{\alpha }$, $  \alpha \in \mathbb{T}$, is 
factorizing the tensor into two parts. Thereby,  we separate  the tree into two subtrees.  Each part
can be treated independently in  an analogous way  as before by
  applying a singular value decomposition. 
 This procedure can be continued  in a way such that one ends up
  with an explicit description of the component tensors. 
 There are several  sequential orders  
  one can proceed including top-down and bottom-up strategies. We will call  every decomposition of the 
 above type a {\em higher order singular value decomposition (HOSVD)} or in the hierarchical setting
 a {hierarchical  singular value decomposition (HSVD)}. 
  As long as  no approximation,  i.e.,
 no truncation,  has been applied during  the corresponding SVDs,  
 one obtains an exact recovery of the original tensor at the end. 
  The situation changes if we apply truncations (via thresholding). Then the result may depend on the way and the order 
   we proceed as well as on the variant of the HSVD.
 
In order to become more explicit let us demonstrate an HSVD  procedure for the model example of a TT-tensor
\cite{oseledets}, already introduced in \cite{vidal}   
for the matrix product representation. 
Without truncations this algorithm provides an exact reconstruction with a TT representation provided the multi-linear rank 
${\mathbf{s}} = (s_1,\hdots,s_{d-1})$
is chosen large enough. In general, the $s_i$'s can be chosen to be larger than the dimensions $n_i$.
Via inspecting the ranks of the relevant matricisations, the multilinear rank $\mathbf{s}$ may be determined a priori. \begin{enumerate}
\item Given: $ \mathbf{u} \in {\cal H}_d$  
of multi-linear rank ${\bf s}= (s_1, \ldots , s_{d-1} )$, $s_0= s_d:=1 .$
\item Set $\mathbf{v}^1 = \mathbf{u} $.  
\item {\bf For} $i=1, \ldots, d-1$ {\bf do }
\hspace{0.6cm} \begin{itemize}
                    \item  Form matricisation $\mathbf{V}^i \in \mathbb{R}^{s_{i-1}n_i \times n_{i+1} \cdots n_d}$ 
                    via $$\mathbf{V}^i_{ (k_{i-1} , \mu_i ) ; ( \mu_{i+1} , \ldots , \mu_d)} = \mathbf{v}^i(k_{i-1}, \mu_i,\mu_{i+1},\hdots,\mu_d).$$                    
                    \item Compute the SVD of $\mathbf{V}^i$: 
$$ \mathbf{V}^i_{ (k_{i-1} , \mu_i ) ; ( \mu_{i+1} , \ldots , \mu_d)} = \sum_{k_i=1}^{s_i} \sigma_{k_i}^{i}
                    \mathbf{b}^i  (k_{i-1} , \mu_i , k_i ) \mathbf{d}^{i+1} ( k_i , \mu_{i+1} ,     \ldots , \mu_d ), $$ \\
                    where the $ \mathbf{b}^i(\cdot,\cdot, k_i)$ and the $\mathbf{d}^{i+1}(k_i,\cdots)$ are orthonormal 
                    (i.e., the left and right singular vectors) and
                    the $\sigma_{k_i}^{i}$ are the nonzero singular values of $\mathbf{V}^i$.
                    \item Set $\mathbf{v}^{i+1}(k_i, \mu_{i+1},\cdots,\mu_d)  = \sigma_{k_i}^{i} \mathbf{d}^{i+1}(k_i, \mu_{i+1}, \ldots, \mu_d)$ 
                     \end{itemize}
  \item Set $\mathbf{b}^d( k_{d-1} , \mu_d) := \mathbf{v}^{d}  ( k_{d-1} , \mu_d) $ and $\mathbf{B}_i(\mu_i)_{k_{i-1},k_i} =
   \mathbf{b}^i( k_{i-1} , \mu_i,k_i)$ for $i=1,\hdots,d$.
  \item Decomposition $ \mathbf{u}  (\pmb{\mu})  = \mathbf{B}_1 (\mu_1) \cdots \mathbf{B}_d ( \mu_d) $.
\end{enumerate}
Above, the indices $k_i$ run from $1$ to $n_i$ and, for notational consistency, $ k_0 = k_d=1$. 
Let us notice that the present algorithm  is not the only way to use multiple singular value decompositions 
in order to obtain a hierarchical  representation of $\mathbf{u}$ for the given tree, here a TT representation. 
For example, one may  start at the right end separating $ \mathbf{b}^{d} $ first and so on. 
The procedure above  provides some {\em normal  form}  of  the tensor. 

Let us now explain hard thresholding on the example of  a TT tensor and the HSVD defined above.   
This procedure remains essentially the same with the only difference that we apply a thresholding 
to a target rank $ \mathbf{r} = (r_i)_{i=1}^{d-1}$ with $r_i \leq s_i$ at each step by setting
$\sigma^i_{k_i} = 0 $ for all $ k_i > r_i $, $i=1,\hdots,d-1$, where the $\left(\sigma^i_{k_i}\right)_{k_i} $ is the monotonically decreasing sequence of singular 
values of $ \mathbf{V}^i$. 
This leads to an approximate right factor $ \mathbf{v}^i_{\epsilon} $, within a controlled $\ell_2$ error 
$\epsilon_i = \sqrt{\sum_{k_i > r_i}  ( \sigma^i_{k_i})^2 } $.   
By the  {\em  hard thresholding}  HSVD  procedure  presented above, one obtains a unique  approximate 
tensor 
\begin{equation} \label{eq:hr}
\mathbf{u}_{\epsilon} :=\mathbf{H}_{\mathbf{r}} (\mathbf{u}) 
\end{equation}
of multi-linear rank $\mathbf{r}$ within a guaranteed error bound 
$$   
\| \mathbf{u}_{\epsilon}  - \mathbf{u}  \| \leq \sum_{i=1}^{d-1} \epsilon_i  \ . 
$$ 
In contrast to the matrix case, this approximation $\mathbf{u}_{\epsilon}$, however, 
may not be the best rank $\mathbf{r}$  approximation of $\mathbf{u} $, which is in fact NP hard to compute 
\cite{NPcomplete, Fried}. 
 A more evolved analysis shows the following quasi-optimal error bound. 

\begin{theorem} \label{thm:quasi} 
Let $ \mathbf{u}_{\epsilon} = \mathbf{H}_{\mathbf{r}} (  \mathbf{u}) $. Then there exists $  C(d) = \mathcal{O} (\sqrt{d})$,  
such that $  \mathbf{u}_{\epsilon}$ satisfies the quasi-optimal error bound
\begin{equation} \label{eq:quasi}
\inf  \{\| \mathbf{u}-   \mathbf{v}\| :  \mathbf{v}   \in \mathcal{M}_{\leq \mathbf{r}} \} 
\leq \| \mathbf{H}_{\mathbf{r}} ( \mathbf{u}) \|  
\leq C (d) \inf   \{\| \mathbf{u}-   \mathbf{v}\| : \mathbf{v}  \in \mathcal{M}_{\leq\mathbf{r}} \}  \ . 
\end{equation}
The constant satisfies $C(d)=\sqrt{d}$ for the Tucker format \cite{lars}$, C(d) = \sqrt{d-1} $ for the TT format \cite{oseledets1} and $C(d) = \sqrt{2d-3 } $ for a balanced tree in the HSVD of \cite{lars}.
\end{theorem}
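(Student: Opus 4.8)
The plan is to establish the two inequalities in \eqref{eq:quasi} separately. The left inequality $\inf\{\|\mathbf{u}-\mathbf{v}\| : \mathbf{v}\in\mathcal{M}_{\leq\mathbf{r}}\} \leq \|\mathbf{u}-\mathbf{H}_{\mathbf{r}}(\mathbf{u})\|$ is immediate, since $\mathbf{H}_{\mathbf{r}}(\mathbf{u})\in\mathcal{M}_{\leq\mathbf{r}}$ by construction of the hard thresholding procedure, so the infimum over the whole set is bounded by the value at this particular element. (I read the middle term $\|\mathbf{H}_{\mathbf{r}}(\mathbf{u})\|$ in the displayed statement as a typo for $\|\mathbf{u}-\mathbf{H}_{\mathbf{r}}(\mathbf{u})\|$, which is the quantity actually controlled.) So the whole content is the right inequality, the quasi-optimality bound with constant $C(d)$.

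For the right inequality I would work format by format, starting with the TT case since it is the running prototype. Write $\mathbf{u}_\epsilon = \mathbf{H}_{\mathbf{r}}(\mathbf{u})$ obtained by the $d-1$ successive truncated SVDs described in the HSVD algorithm above, and let $\mathbf{u}_\epsilon^{(i)}$ denote the intermediate tensor after the first $i$ truncations have been applied (so $\mathbf{u}_\epsilon^{(0)}=\mathbf{u}$, $\mathbf{u}_\epsilon^{(d-1)}=\mathbf{u}_\epsilon$). The key structural fact is that at step $i$ the truncation acts as an orthogonal projection $P_i$ onto the top $r_i$ left singular vectors of the matricisation $\mathbf{V}^i$, which is a contraction, and that these projections applied at different levels of the tree commute with the matricisation operators at the other levels in a way that preserves their ranks. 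Using a triangle-inequality decomposition
\[
\|\mathbf{u}-\mathbf{u}_\epsilon\| \leq \sum_{i=1}^{d-1}\|\mathbf{u}_\epsilon^{(i-1)}-\mathbf{u}_\epsilon^{(i)}\| = \sum_{i=1}^{d-1}\epsilon_i,
\]
one reduces everything to bounding each $\epsilon_i$, the tail of singular values of $\mathbf{V}^i$, by the global best-approximation error. The central lemma I would prove is that for each node $\alpha$,
\[
\sum_{k>r_\alpha}(\sigma_k^\alpha)^2 \;\leq\; \inf\{\|\mathbf{u}-\mathbf{v}\|^2 : \mathbf{v}\in\mathcal{M}_{\leq\mathbf{r}}\},
\]
because the best rank-$\mathbf{r}$ tensor is in particular a tensor whose $\alpha$-matricisation has rank at most $r_\alpha$, and for matrices the best rank-$r_\alpha$ approximation error in Frobenius norm is exactly $\sqrt{\sum_{k>r_\alpha}(\sigma_k^\alpha)^2}$ (Eckart–Young), while the Frobenius norm of a matricisation equals the $\ell_2$ norm of the tensor. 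One must check that the tail singular values do not grow under the earlier truncations — here one uses that orthogonal projection can only decrease singular values of any matricisation, a Weyl-type monotonicity statement. Combining, each $\epsilon_i \leq \inf\{\|\mathbf{u}-\mathbf{v}\| : \mathbf{v}\in\mathcal{M}_{\leq\mathbf{r}}\}$, and summing the $d-1$ terms gives $C(d)=\sqrt{d-1}$ for TT. The Tucker case is analogous with $d$ independent mode matricisations (all truncated from the original $\mathbf{u}$, no sequential dependence), giving $C(d)=\sqrt{d}$; the balanced binary tree has $2d-3$ internal edges, giving $C(d)=\sqrt{2d-3}$.

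The main obstacle is the commuting/monotonicity bookkeeping: unlike the matrix SVD, the projections $P_i$ at different tree nodes do not literally commute, and after truncating at node $\alpha_1$ the singular values at node $\alpha_2$ can change. The correct statement is the weaker monotonicity that $\sigma_k^{\alpha}(P_i\mathbf{u}) \leq \sigma_k^{\alpha}(\mathbf{u})$ for any orthogonal projection $P_i$ acting on a complementary or nested mode group, which follows from $P_i$ being a contraction together with the fact that matricisation is an isometry; making this precise for a general tree, and in particular handling the nestedness of subspaces $U_\alpha \subset U_{\alpha_1}\otimes U_{\alpha_2}$, is the technical heart. For the detailed arguments in the three cases I would cite \cite{lars} for Tucker and the balanced HSVD and \cite{oseledets1} for TT, which is consistent with the attributions already given in the statement, and merely sketch the unified triangle-inequality-plus-Eckart–Young skeleton here.
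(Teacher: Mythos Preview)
The paper does not actually prove this theorem; it states the result and cites \cite{lars} and \cite{oseledets1} for the constants, noting only that ``a more evolved analysis'' is needed beyond the crude bound $\|\mathbf{u}_\epsilon-\mathbf{u}\|\leq\sum_i\epsilon_i$ already displayed just before the theorem. So your plan to sketch the argument and defer to those references is in line with what the paper does.

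That said, there is a genuine gap in your sketch that would prevent it from yielding the stated constants. You write the triangle-inequality bound
\[
\|\mathbf{u}-\mathbf{u}_\epsilon\|\leq\sum_{i=1}^{d-1}\epsilon_i,
\]
then bound each $\epsilon_i\leq E^*:=\inf\{\|\mathbf{u}-\mathbf{v}\|:\mathbf{v}\in\mathcal{M}_{\leq\mathbf{r}}\}$, and claim that ``summing the $d-1$ terms gives $C(d)=\sqrt{d-1}$''. But summing gives $(d-1)E^*$, not $\sqrt{d-1}\,E^*$. The paper explicitly distinguishes the triangle-inequality bound $\sum_i\epsilon_i$ from the sharper theorem, calling the latter a ``more evolved analysis''. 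What is missing is a Pythagorean step: because each truncation is an orthogonal projection and the successive error increments $\mathbf{u}_\epsilon^{(i-1)}-\mathbf{u}_\epsilon^{(i)}$ are mutually orthogonal (this is the nontrivial structural fact, proved in \cite{lars,oseledets1} from the nestedness of the projections along the tree), one has
\[
\|\mathbf{u}-\mathbf{u}_\epsilon\|^2\leq\sum_{i}\epsilon_i^2\leq(\text{number of truncations})\cdot (E^*)^2,
\]
and taking square roots gives the factors $\sqrt{d}$, $\sqrt{d-1}$, $\sqrt{2d-3}$. Your discussion of ``commuting/monotonicity bookkeeping'' is in the right neighbourhood but never invokes the orthogonality of the increments, which is the key idea; the singular-value monotonicity you mention is needed to control each $\epsilon_i$ by the tail of the \emph{original} singular values, but that alone combined with the triangle inequality only yields the linear-in-$d$ constant.
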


The procedure introduced above can be modified to apply for general hierarchical tensor representations.
When we consider the  HSVD in  the sequel, 
we have in mind that we have fixed our hierarchical  SVD method  choosing one of the several  variants.

\subsection{Hierarchical tensors as differentiable manifolds} 

It has been shown that, fixing a tree $\mathbb{T}$, the set of hierarchical tensors of exactly multi-linear rank 
$\mathbf{r}$ forms an analytical manifold \cite{hrs-tt,uv,falco-hackbusch-nouy,lrsv}.  
We will describe its essential features using 
the TT format or the matrix product representation. 
For an invertible $r_1 \times r_1 $ matrix $ \mathbf{G}_1$, it holds 
\begin{eqnarray*}
\mathbf{u}  (\pmb{\mu})  &  = & {\mathbf{B}^{1} (\mu_{1})   
\mathbf{B}^{2} (\mu_{2})  \cdots \mathbf{B}^{i} (\mu_{i }) \cdots \mathbf{B}^{d} (\mu_{d})} \\
  &  = & {\mathbf{B}^{1} (\mu_{1})  \mathbf{G}_1 \mathbf{G}_1^{-1} 
\mathbf{B}^{2} (\mu_{2})  \cdots \mathbf{B}^{i} (\mu_{i }) \cdots \mathbf{B}^{d} (\mu_{d})} \\
&  = & { \widetilde{\mathbf{B}}^{1}  (\mu_{1})  \widetilde{\mathbf{B}}^{2} (\mu_{2})  \cdots \mathbf{B}^{i} (\mu_{i }) \cdots \mathbf{B}^{d} (\mu_{d})},
\end{eqnarray*}
where $\widetilde{\mathbf{B}}^1(\mu_1)=\mathbf{B}^1(\mu_1)\mathbf{G}_1$ and $\widetilde{\mathbf{B}}^2(\mu_2)=\mathbf{G}_1^{-1}\mathbf{B}^2(\mu_2)$.
This provides two different representations of the same tensor $\mathbf{u}$.
In order to remove the redundancy in the above parametrization of the set $ \mathcal{M}_{\bf r }$, let us consider the linear space of parameters
$ (\mathbf{b}^1 , \ldots , \mathbf{b}^d)  \in \mathcal{X} :=  \times_{i=1}^d  {X}_i $, $ {X_i} := \mathbb{R}^{r_{i-1} r_i n_i } $, or  equivalently 
$ \mathcal{U} :=  \big( \mathbf{B}^1 (.), \ldots , \mathbf{B}^d (.) \big)$,   together with  a Lie group action. 
 For  a collection of  invertible matrices $\mathcal{G} =  (\mathbf{G}_1, \ldots , 
\mathbf{G}_{d-1} )$   we define a transitive   group action by 
$$ \mathcal{G} \circ \mathcal{U} := \big( \mathbf{B}^1  \mathbf{G}_1, \mathbf{G}_1^{-1} \mathbf{B}^2  \mathbf{G}_2 , \
\ldots , \mathbf{G}_{d-1}^{ -1} \mathbf{B}^d \big) \ .  $$
One  observes  that the tensor $\mathbf{u}$ remains unchanged  under this transformation of the component tensors.
 Therefore, we will identify two representations $ \mathcal{U}_1  \sim \mathcal{U}_ 2$,  
  if there exists $ \mathcal{G} $ such that $ \mathcal{U}_2 = \mathcal{G} \circ \mathcal{U}_1 $.
  It is easy to see that   the  equivalence classes 
$  [ \mathcal{U} ] : = \{\mathcal{V} :  \mathcal{U} \sim \mathcal{V} \}$ define  smooth manifolds in $\mathcal{X}$.  
We are interested in the quotient manifold $ \mathcal{X}/ \sim $, which is isomorphic to $\mathcal{M}_{\mathbf{r}}$.    
This construction gives rise to an embedded analytic manifold  \cite{absil,lrsv,uv} where 
 a Riemannian metric is canonically defined.

The {\em  tangent space} $ \mathcal{T}_{\mathbf{u}} $ at $\mathbf{u}  \in \mathcal{M}_{\bf r} $
is of importance for calculations. It can be easily determined by means of  the product rule as follows. 
A generic tensor $ \delta \mathbf{u}  \in \mathcal{T}_{\mathbf{u}}$ of a TT tensor $\mathbf{u}$
 is of the form  
\begin{eqnarray*}
 \delta \mathbf{u} ( \mu_1 , \ldots , \mu_d ) & = & \mathbf{t}^1  ( \mu_1 , \ldots , \mu_d )  
 + \ldots + \mathbf{t}^d (\mu_1, \ldots , \mu_d ) \\
& = &  \delta{\bf B}^1 (\mu_1) {\bf B}^2 (\mu_2) \cdots {\bf B}^d (\mu_d ) + \ldots  \\
 & + &   {\bf B}^1 (\mu_1) \cdots {\bf B}^{i-1} (\mu_{i-1} )  \delta {\bf B}^i  (\mu_i ) {\bf B}^{i+1} (\mu_{i+1} ) \cdots  
 {\bf B}^d (\mu_d)  + \ldots \\
 & + &  
{  \bf B}^1 (\mu_1) \cdots {\bf B}^{d-1} (\mu_{d-1} )  \delta {\bf B}^d  (\mu_d) \ . 
\end{eqnarray*}
 This tensor is uniquely determined if we impose {\em gauging conditions}
 onto $ \delta \mathbf{B}^i $, $i=1, \ldots , d-1$ \cite{hrs-tt,uv}.  There is no gauging condition imposed onto $ \delta \mathbf{B}^d$. 
 Typically these conditions are of the form
\begin{eqnarray}
\sum_{k_{i-1} =1}^{r_{i-1}} \sum_{\mu_i =1}^{n_i} {\mathbf{b}^i (k_{i-1}, \mu_i , k_i ) } \,  \delta \mathbf{b}^i (k_{i-1},\mu_i , k'_i ) 
 & = & 0 \ ,  \  \forall \, k_i , k_i' = 1, \ldots , r_i \ . \label{eq:gauge}
\end{eqnarray}
With this condition at hand an orthogonal projection $P_{\mathcal{T}_{\mathbf{u}}} $  onto the tangent space 
$\mathcal{T}_{\mathbf{u}} $ 
is well defined  and computable in a straightforward way.

The manifold $\mathcal{M}_{\bf r} $ is open and its closure is $\mathcal{M}_{\leq \mathbf{r}} $, the set of  all tensor{s} 
with ranks  at most ${r}_{\alpha} $, $ \alpha  \in \mathbb{T}$,
$$  \mbox{clos }  \big(  \mathcal{M}_{\bf r} \big) = \mathcal{M}_{\leq \mathbf{r}} \ . $$ 
This important result  is based on the observation that the matrix rank is an upper semi-continuous function \cite{hackbusch-falco}.
The  singular points are exactly  those for which at least one rank  $\tilde{r}_{\alpha}  < r_{\alpha}$ is not maximal, see e.g \cite{su2}.
We remark that,  for the root $D$ of the partition tree $\mathbb{T}$,  there is no gauging condition imposed onto $ \delta \mathbf{B}^D$.
 We highlight  the following facts without explicit proofs for hierarchical tensors.
 
\begin{proposition}\label{prop:Riemannian} 
Let $\mathbf{u} \in \mathcal{M}_{ \mathbf{r}}$. Then 
 \begin{itemize}
\item[(a)] the corresponding  gauging conditions (\ref{eq:gauge}) imply  that the  tangential vectors 
$ \mathbf{t}^i $ are pairwise orthogonal;
\item[(b)] the tensor $\mathbf{u}$ is  included in its own tangent space $\mathcal{T}_{\mathbf{u}} $;
\item[(c)] the multi-linear rank of a tangent vector is at most 
$2 \mathbf{r}$, i.e.,   $ \delta \mathbf{u} \in \mathcal{M}_{\leq 2 \mathbf{r}} $. 
\end{itemize}
\end{proposition}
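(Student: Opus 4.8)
\noindent\emph{Plan of proof.} I will argue for the TT (matrix product) representation $\mathbf{u}(\pmb\mu)=\mathbf{B}^1(\mu_1)\cdots\mathbf{B}^d(\mu_d)$; the general hierarchical case follows by the same reasoning applied node by node. By the HSVD normal form I may assume $\mathbf{u}$ is left-orthogonalized, i.e.\ the reshaped component matrices $L^i\in\mathbb{R}^{r_{i-1}n_i\times r_i}$ with $L^i\big((k_{i-1},\mu_i),k_i\big)=\mathbf{b}^i(k_{i-1},\mu_i,k_i)$ satisfy $(L^i)^\top L^i=\mathbf{I}_{r_i}$ for $i=1,\dots,d-1$; in this normalization the gauging conditions \eqref{eq:gauge} are precisely $(L^i)^\top\,\delta L^i=0$. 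For $\ell=1,\dots,d-1$ write $\mathbf{U}^{\le\ell}\in\mathbb{R}^{n_1\cdots n_\ell\times r_\ell}$ for the left interface obtained by contracting $\mathbf{B}^1\cdots\mathbf{B}^\ell$, and $\mathbf{U}^{\ge\ell+1}\in\mathbb{R}^{n_{\ell+1}\cdots n_d\times r_\ell}$ for the right interface from $\mathbf{B}^{\ell+1}\cdots\mathbf{B}^d$; left-orthogonality gives $(\mathbf{U}^{\le\ell})^\top\mathbf{U}^{\le\ell}=\mathbf{I}_{r_\ell}$, and I will repeatedly use the recursion $\mathbf{U}^{\le\ell}=(\mathbf{U}^{\le\ell-1}\otimes\mathbf{I}_{n_\ell})L^\ell$ together with the analogous identity for $\delta\mathbf{U}^{\le\ell}$.

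For (a), fix $i<j$. Matricizing at the cut between modes $i$ and $i+1$, the term $\mathbf{t}^i$ (whose perturbation sits at position $i$) factors as $(\delta\mathbf{U}^{\le i})(\mathbf{U}^{\ge i+1})^\top$, while $\mathbf{t}^j$ (perturbation at $j\ge i+1$, so positions $1,\dots,i$ are unperturbed) factors as $\mathbf{U}^{\le i}\,\mathbf{M}$ for a suitable matrix $\mathbf{M}$ with $r_i$ rows. Hence $\langle\mathbf{t}^i,\mathbf{t}^j\rangle$ equals the Frobenius inner product of these two matrices, which after cycling the trace takes the form $\operatorname{tr}\big((\mathbf{U}^{\le i})^\top(\delta\mathbf{U}^{\le i})\,\mathbf{N}\big)$ for some matrix $\mathbf{N}$. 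Substituting the recursions and using $(\mathbf{U}^{\le i-1})^\top\mathbf{U}^{\le i-1}=\mathbf{I}$ collapses $(\mathbf{U}^{\le i})^\top(\delta\mathbf{U}^{\le i})$ to $(L^i)^\top\,\delta L^i$, which vanishes by the gauge condition at position $i$ (available since $i\le j-1\le d-1$). Therefore all cross terms vanish and the $\mathbf{t}^i$ are pairwise orthogonal.

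For (b), I will display an explicit tangent direction: choose $\delta\mathbf{B}^d:=\mathbf{B}^d$ and $\delta\mathbf{B}^i:=0$ for $i<d$; the gauge conditions hold trivially and the associated tangent vector is $\mathbf{t}^d=\mathbf{B}^1(\mu_1)\cdots\mathbf{B}^{d-1}(\mu_{d-1})\mathbf{B}^d(\mu_d)=\mathbf{u}$, so $\mathbf{u}\in\mathcal{T}_{\mathbf u}$. Equivalently, $\mathcal{M}_{\mathbf r}$ is invariant under nonzero rescaling, so $t\mapsto t\,\mathbf{u}$ is a curve in $\mathcal{M}_{\mathbf r}$ with velocity $\mathbf{u}$ at $t=1$. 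For (c), fix a node, which in the TT case is $\alpha\simeq\{1,\dots,\ell\}$, and matricize $\delta\mathbf u=\sum_{i\le\ell}\mathbf{t}^i+\sum_{i>\ell}\mathbf{t}^i$ at the corresponding cut. Each $\mathbf{t}^i$ with $i\le\ell$ matricizes as $\mathbf{L}^{(i)}(\mathbf{U}^{\ge\ell+1})^\top$ with the \emph{same} right factor $(\mathbf{U}^{\ge\ell+1})^\top$ of $r_\ell$ columns, so $\sum_{i\le\ell}\mathbf{t}^i$ has rank at most $r_\ell$; likewise each $\mathbf{t}^i$ with $i>\ell$ matricizes as $\mathbf{U}^{\le\ell}\,\mathbf{R}^{(i)}$ with the same left factor $\mathbf{U}^{\le\ell}$ of $r_\ell$ columns, so $\sum_{i>\ell}\mathbf{t}^i$ has rank at most $r_\ell$. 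Adding these, $\operatorname{rank}\big((\delta\mathbf u)^\alpha\big)\le 2r_\ell=2r_\alpha$, i.e.\ $\delta\mathbf u\in\mathcal{M}_{\le 2\mathbf r}$; for a general tree one replaces the cut at $\ell$ by the matricization $\mathbf{U}^\alpha$ and groups the summands according to whether their perturbed vertex lies inside or outside the subtree rooted at $\alpha$.

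The only genuinely delicate point is (a): one must fix the gauge conventions against a left-orthogonal normal form so that \eqref{eq:gauge} becomes exactly $(L^i)^\top\delta L^i=0$, and keep careful track that only the interface orthogonalities strictly to the left of the smaller index $i$ enter the computation. Parts (b) and (c) are then routine manipulations of the matrix-product structure, and the orthogonality in (a) also immediately yields the Pythagorean identity $\|\delta\mathbf u\|^2=\sum_i\|\mathbf{t}^i\|^2$.
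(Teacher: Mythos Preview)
The paper does not prove Proposition~\ref{prop:Riemannian}; it explicitly introduces the result with the sentence ``We highlight the following facts without explicit proofs for hierarchical tensors'' and only points to \cite{lrsv} for related curvature estimates. Thus there is no proof in the paper to compare against.

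Your argument is correct and is exactly the kind of direct verification one would expect. In (a) you correctly reduce $\langle\mathbf t^i,\mathbf t^j\rangle$ for $i<j$ to a trace containing the factor $(\mathbf U^{\le i})^\top\delta\mathbf U^{\le i}$, and the recursion together with left-orthogonality of $\mathbf U^{\le i-1}$ collapses this to $(L^i)^\top\delta L^i=0$; your observation that only orthogonalities strictly below $i$ and the gauge condition at $i$ (available since $i\le d-1$) are used is accurate. Part (b) via $\delta\mathbf B^d=\mathbf B^d$ (no gauge condition at the root) is the standard one-line proof, and your rescaling-curve remark is an equally valid alternative. For (c), grouping the summands $\mathbf t^i$ at the cut $\ell$ according to $i\le\ell$ versus $i>\ell$ and observing that each group shares a common factor with $r_\ell$ columns is the right idea and yields the bound $2r_\alpha$ at every node. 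The extension to a general dimension tree by replacing ``left of the cut'' with ``inside the subtree rooted at $\alpha$'' is also correct.
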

Curvature estimates are given in \cite{lrsv}.

 \section{Tensor completion for hierarchical tensors} 

\subsection{The low rank tensor recovery problem}

We pursue on 
extending  methods for solving optimization problems in the calculus of hierarchical tensors 
to {\em low rank tensor recovery}  
and to {\em tensor  completion} as a special case.
The latter   builds on ideas from the theory of
 {\em compressed sensing} \cite{rauhut},  which predicts that sparse vectors can be recovered efficiently from incomplete linear 
 measurements via efficient algorithms. 
Given a linear measurement mapping $\mathcal{A}  :  \mathcal{H}_d = \bigotimes_{i=1}^d  \mathbb{R}^{n_i} \to 
 \mathbb{R}^m$ our aim is to recover a tensor $ \mathbf{u}  \in \mathcal{H}_d$ 
 from $m \ll N:= n_1\cdot n_2 \cdots n_d$ measurements $ \mathbf{b} \in \mathbb{R}^m$ given by
  \begin{equation*}
\mathbf{ b} = (b_i)_{i=1}^m  = \mathcal{A} \mathbf{ u}  \ . 
\end{equation*} 
Since this problem is underdetermined we additionally assume that $\mathbf{u}$ is of low rank, i.e.,
given a dimension tree $\mathbb{T}$  and a multi-linear rank $ \mathbf{r}$, we suppose that the tensor is 
contained in the corresponding tensor manifold, $\mathbf{u}  \in \mathcal{M}_{\mathbf{r}}$. 

The {\em tensor completion} problem -- generalizing the matrix completion problem \cite{care09,cata10,gr11} --
is the special case where the measurement map subsamples entries of the tensor,
i.e., 
$ b_i= \big( \mathcal{A} \mathbf{ u} \big)_i =  \mathbf{u} (\pmb{\mu}_i)=\mathbf{u} ( \mu_{1,i} , \ldots , \mu_{d,i} ) $, $ i =1 , \ldots , m$, with the
multi-indices $\pmb{\mu}_i$ being contained in a suitable index set $\Omega \subset [n_1] \times \cdots \times [n_d]$ of cardinality $m \ll n_1 \cdots n_d$.

We remark that in practice the desired rank $\mathbf{r}$ may not be known in advance and/or the tensor $\mathbf{u}$ is only close 
to $\mathcal{M}_{\mathbf{r}}$ rather than being exactly contained in $\mathcal{M}_{\mathbf{r}}$. Moreover,
the left hand side $\mathbf{b}$ may not be known exactly because of noise on the measurements.
In the present paper we defer from tackling these important stability and robustness issues and focus on the problem in the above form. 
 
The problem of reconstructing ${\mathbf{u}} \in \mathcal{H}_d$ from $\mathbf{b} = {\cal A} {\mathbf{u}}$ can be reformulated
to finding the minimizer of
 \begin{equation} \label{eq:opt}
 \mathcal{J} (\mathbf{v} ) =  \frac{1}{2} \| \mathcal{A} \mathbf{ v} - \mathbf{b} \|^2
\quad \mbox{ subject to } \mathbf{v} \in \mathcal{M}_{\mathbf{r}} \ .
\end{equation}
In words, we are looking for a tensor of multi-linear rank $\mathbf{r} $, which fits best the given measurements. A minimizer over $\mathcal{M}_{\leq \mathbf{r}}$ always exists, but a solution of the above problem 
 may not exist  in general since $\mathcal{M}_{\mathbf{r}} $ is not closed. 
 However, assuming ${\mathbf{u}} \in \mathcal{M}_{\mathbf{r}}$ and $\mathbf{b} = {\cal A}\mathbf{u}$ as above,
 existence of a minimizer is trivial because setting
 $\mathbf{v} = \mathbf{u}$ gives   $\mathcal{J} (\mathbf{v} ) = 0$. 
We note that finding a minimizer of \eqref{eq:opt} is NP-hard in general \cite{NPcomplete,Fried}. 
  
The necessary  first order condition for a minimizer of the problem
(\ref{eq:opt}) can be formulated as  follows, see e.g.~\cite{lrsv}.  
If $ \mathbf{u}   \in \mathcal{M}_{\bf r}  $ is a solution of $\mbox{argmin}_{\mathbf{v}\in \mathcal{M}_{\mathbf{r}}} \mathcal{J}\left(\mathbf{v}\right)$,  then
\begin{equation*}  
\langle \nabla  \mathcal{J} ( \mathbf{u} )  , \delta\mathbf{u} \rangle  = 0 \ , \ \ \mbox{ for all } \delta\mathbf{u} 
 \in \mathcal{T}_{\mathbf{u}},
\end{equation*}
where $\nabla \mathcal{J} $ is the gradient of $\mathcal{J}$. 

\subsection{Optimization approaches}

In analogy to compressed sensing and low rank matrix recovery where convex relaxations ($\ell_1$-minimization and nuclear norm minimization)
are very successful, a first idea for a tractable alternative to \eqref{eq:opt} may be to find an analogue of the nuclear norm for the tensor case.
A natural approach is to consider the set $Q$ of unit norm rank one tensors in ${\cal H}_d$,
\[
Q = \{ \mathbf{u} = \mathbf{b}_1 \otimes \mathbf{b}_2 \cdots \otimes \mathbf{b}_d : \| \mathbf{u} \| = 1 \} \ . 
\]
Its closed convex hull $B = \overline{\operatorname{conv} Q}$ is taken as the unit ball of the tensor nuclear norm, so that
the tensor nuclear norm is the gauge function of $B$,
\[
\| \mathbf{u} \|_* = \inf \{ t: \mathbf{u} \in tB\} \ .
\]
In fact, for the matrix case $d=2$ we obtain the standard nuclear norm. Unfortunately, for $d \geq 3$, the nuclear tensor
norm is NP hard to compute \cite{NPcomplete,Fried}.

The contributions \cite{recht,goldfarb,liu1} proceed differently by considering the matrix nuclear norm of several unfoldings of the tensor ${\mathbf{u}}$. 
Given a dimension tree $\alpha \in \mathbb{T}$ and corresponding matricisations $\mathbf{U}^{\alpha} $ of ${\mathbf{u}}$,
we consider the Schatten norm
$$  \| \mathbf{U}^{\alpha} \|_{p} := 
 \bigg( \sum_{k_{\alpha}} ( \sigma^{\alpha}_{ k_{\alpha } })^p  \bigg)^{\frac{1}{p} } , \ \alpha \in \mathbb{T}, \quad 1 \leq p < \infty \ ,$$
where the  $\sigma^{\alpha}_{ k_{\alpha}}$ are the singular values of $\mathbf{U}^{\alpha}$, see e.g.\  \cite{bh97,su1}.
Furthermore, for $1 \leq q \leq \infty$ and given  $ a_\alpha> 0 $, e.g. $a_\alpha =1$, a   
norm on $ \mathcal{H}_d$  can be introduced by 
 $$ \| \mathbf{u} \|_{p,q}^q : =  \sum_{\alpha \in \mathbb{T} }    a_\alpha \| \mathbf{U}^{\alpha} \|_{p}^q \, .$$
 A prototypical choice of a convex optimization formulation  used  for tensor recovery consists in finding
 $$
 \mbox{argmin} \{   \mathcal{J} ( \mathbf{u}) := \| \mathbf{u} \|_{1,q} : \mathcal{A} \mathbf{u} = \mathbf{b} \} \ . 
 $$
 For $a_\alpha =1$ and $q=1$ this functional was suggested in \cite{recht,liu1} for reconstructing tensors in the Tucker format.
 Although the numerical results are reasonable, it seems that conceptually this is not the ``right'' approach and too simple for 
 the present purpose, see corresponding negative results in \cite{goldfarb2}.
 For the Tucker case first rigorous results have been shown in \cite{goldfarb}. However the approach followed there is based on results
 from matrix completion and does not use the full potential of tensor decompositions. 
 In fact, their bound on the number of required measurements is $ \mathcal{O} ( r n^{d-1} ) $. 
A more ``balanced'' version of this approach is considered in \cite{goldfarb2}, where the number of required measurements
scales like $ \mathcal{O} ( r^{d/2} n^{d/2} ) $, which is better but still far from the expected linear scaling in $n$, see also 
Theorem~\ref{thm:TRIP:Gaussian} below.

\subsection{Iterative hard thresholding  schemes} 

Rather than following the convex optimization approach which leads to certain difficulties as outlined above, 
we consider versions of the iterative hard thresholding algorithm
well-known from compressed sensing \cite{blda08,rauhut} and low rank matrix recovery \cite{tanner}.
Iterative hard thresholding algorithms fall into the larger class of {\em projected gradient methods}. Typically 
one  performs a gradient step in the ambient space  $\mathcal{H}_d$ , followed
by a mapping  $ \mathcal{R} $
onto the set of low rank tensors $ \mathcal{M}_{ \mathbf{r}}$ or $\mathcal{M}_{\leq \mathbf{r}}$,  formally
  \begin{eqnarray*}
 \mathbf{ y}^{n+1} & : = & \mathbf{u}^{n} - \alpha_n 
 \nabla  \mathcal{J} ( \mathbf{ u}^n )    \quad \mbox{(gradient step)} \nonumber  \\
 &  = & \mathbf{u}^{n} - \alpha_n \big( \mathcal{A}^* 
 ( \mathcal{A} \mathbf{ u}^n - \mathbf{b}  )   \big) \ ,    \label{eq:grad}\\
  \mathbf{u}^{n+1} & : = & \mathcal{R} (\mathbf{y}^{n+1})   \quad  \mbox{(projection step)}\ .    \label{eq:nonpro}
\end{eqnarray*}
Apart from specifying the steplength $\alpha_n$, the above algorithm depends on the choice of the projection  operator 
$\mathcal{R} : \mathcal{H}_d \to \mathcal{M}_{\leq \mathbf{r}}$. 
An example would be
$$ \mathcal{R}( \mathbf{y}^{n+1} ):   = \mbox{argmin} \{ \| \mathbf{y}^{n+1}  - \mathbf{z}�\| : \mathbf{z} \in \mathcal{M}_{\leq \mathbf{r}} \} 
\ .$$
Since this projection  is not computable in general \cite{NPcomplete,Fried},  we may rather choose the hierarchical  singular value (HSVD) 
thresholding procedure  (\ref{eq:hr})   
\begin{equation*} 
  \mathbf{u}^{n+1} := \mathcal{R} (\mathbf{y}^{n+1} )  = \mathbf{H}_{\mathbf{r}} (\mathbf{y}^{n+1} ) \quad    \mbox{ (hard thresholding)} \ , 
\end{equation*}
which is only quasi-optimal (\ref{eq:quasi}). We will call this procedure {\em tensor iterative hard thresholding  (TIHT)}, or shortly {\em iterative hard thresholding (IHT)}.  

Another possibility for the projection operator relies on the concept of retraction from differential geometry \cite{absil}.
A retraction maps $ \mathbf{u} + \boldsymbol{\xi} $, where 
 $ \mathbf{u}  \in  \mathcal{M}_{ \mathbf{r}} $ and $ \boldsymbol{\xi} \in 
 \mathcal{T}_{ \mathbf{u}} $,  smoothly  to the manifold.   
 For $R :   (  \mathbf{u} , \boldsymbol{\xi} )  \to  R (  \mathbf{u} , \boldsymbol{\xi} ) \in  \mathcal{M}_{ \mathbf{r}}   $ 
 being a retraction it is required that $ R$ 
 is twice differentiable  and $ R ( \cdot, \mathbf{0} ) = \mathbf{I} $ is the identity.   
 Moreover, a retraction satisfies, for $\|\xi\|$ sufficiently small, 
\begin{eqnarray}
  \|    \mathbf{u} + \boldsymbol{\xi} - R (  \mathbf{u} , \boldsymbol{\xi} ) \|�&=& \mathcal{O} ( \| \boldsymbol{\xi} \|^2) \ , \label{retract:prop1}\\
   \|    \mathbf{u}  - R (  \mathbf{u} , \boldsymbol{\xi} ) \|�& = &  \mathcal{O} ( \| \boldsymbol{\xi} \|) \ . \notag 
\end{eqnarray}
Several examples of retractions for hierarchical  tensors are known \cite{lrsv,kressner}, which can be efficiently computed.
If a retraction is available, then a  nonlinear projection $\mathcal{R} $ can be realized in two steps.  
First we project (linearly)  onto the tangent space $\mathcal{T}_{\mathbf{u}^n}$ at $\mathbf{u}^n $,  and afterwards we apply a
{\em retraction}  $R$. This leads to the so-called  {\em Riemaniann gradient iteration method}  (RGI) defined formally as
 \begin{eqnarray*}
 \mathbf{ z}^{n+1} & : = &P_{\mathcal{T}_{\mathbf{u}^n}} \big(
  \mathbf{u}^{n} - \alpha_n P_{\mathcal{T}_{\mathbf{u}^n}} \big( \mathcal{A}^* 
 ( \mathcal{A} \mathbf{ u}^n - \mathbf{b} )   \big)\big) \ \ \mbox{(projected gradient step)}   \label{eq:linpro} \\
&  = &P_{\mathcal{T}_{\mathbf{u}^n}} \big(
  \mathbf{u}^{n} - \alpha_n \mathcal{A}^* 
 ( \mathcal{A} \mathbf{ u}^n - \mathbf{b} )   \big)  
\nonumber 
   = :  \mathbf{u}^{n} +  \boldsymbol{\xi}^n  \label{eq:rgi}  \\
  \mathbf{u}_{n+1} & : = &  {R} (\mathbf{u}^n ,   \mathbf{z}^{n+1} - \mathbf{u}^n )=  {R} (\mathbf{u}^n ,   \boldsymbol{\xi}^n )  
  \label{eq:retract} \ \ 
    \mbox{(retraction step)} .  
\end{eqnarray*}
With a slight abuse of notation we will write  $$\mathcal{R} (\mathbf{y}^{n+1})   = R \circ P_{\mathcal{T}_{\mathbf{u}^n}} \mathbf{y}^{n+1}$$ for the RGI.
 
It may happen that an iterate $ \mathbf{u}^n $ is  of lower rank, i.e., $ \mathbf{u}^n \in \mathcal{M}_{\mathbf{s}}$ with
$s_\alpha < r_\alpha$ at least for one  $\alpha \in \mathbb{T}$. 
In this case $\mathbf{u}^n \in \mathcal{M}_{\leq \mathbf{r}}$ is a singular point and  no longer on our manifold, i.e., 
$ \mathbf{u}^n  \not\in \mathcal{M}_{\mathbf{r}}$, 
  and our 
  RGI algorithm  
  fails. However, since $\mathcal{M}_{\mathbf{r}}$ is dense in $\mathcal{M}_{\leq \mathbf{r}} $, for arbitrary 
  $\epsilon >0$, there exists $ \mathbf{u}_{\epsilon}^n \in \mathcal{M}_{\mathbf{r}}$, with
  $\|  \mathbf{u}^n - \mathbf{u}_{\epsilon}^n \|< \epsilon $. Practically such a regularized $\mathbf{u}^n_{\epsilon} $ is not hard to choose. 
  Alternatively,  the algorithm described above 
  may be regularized in a sense that it automatically avoids the situation being trapped in a singular point \cite{kressner}.
Here, we do not  go into these  technical details.

\subsection{Restricted isometry property for hierarchical tensors}

A crucial sufficient condition for exact recovery in compressed sensing is the {\em restricted isometry property (RIP)}, see Chapter 1.
It has been applied in the analysis of iterative hard thresholding both in the compressed sensing setting \cite{blda08,rauhut}
as well as in the low rank matrix recovery setting \cite{tanner}.
The RIP can be easily 
generalized to the present tensor setting. 
As common, $ \| \cdot \| $ denotes the Euclidean norm below. 

\begin{definition}
Let $\mathcal{A}: \mathcal{H}_d=\bigotimes_{i=1}^d\mathbb{R}^{n_i} \rightarrow \mathbb{R}^m $ be a linear measurement map, $\mathbb{T}$
be a dimension tree and for $\mathbf{r} = (r_\alpha)_{\alpha \in \mathbb{T}}$, let ${\mathcal M}_{\mathbf{r}}$ be the associated low rank tensor manifold.
The tensor restricted isometry constant (TRIC)  
$\delta_{\mathbf{r}}$ of $\mathcal{A}$
 is the smallest number such that
 \begin{equation} \label{eq:TRIP}
(1 - \delta_{\mathbf{r}} ) \| \mathbf{u}  \|^2 \leq \| \mathcal{A} \mathbf{u} \|^2 \leq  (1+ \delta_{\mathbf{r}} ) \| \mathbf{u} \|^2,  \quad \mbox{
 for all } \mathbf{u} \in {\mathcal M}_{\mathbf{r}}.
\end{equation}
\end{definition}
Informally, we say that a measurement map ${\cal A}$ satisfies the {\it tensor restricted isometry property} (TRIP)
if $\delta_{\mathbf{r}}$ is small (at least $\delta_{\mathbf{r}} < 1$) for some ``reasonably large'' $\mathbf{r}$.
 
Observing that $ \mathbf{u}  + \mathbf{v} \in \mathcal{M}_{\leq2\mathbf{r}} $
for two tensors $\mathbf{u}, \mathbf{v}   \in   \mathcal{M}_{\leq\mathbf{r}} $ 
the TRIP (\ref{eq:TRIP}) of order  $2 \mathbf{r}$ 
implies that 
$ \mathcal{J} $ has a unique minimizer on $\mathcal{M}_{\mathbf{r}}$.   
Indeed,  for two tensors $ \mathbf{u}_1   , \mathbf{u}_2 \in   \mathcal{M}_{\mathbf{r}}$ satisfying
$  \mathcal{A} \mathbf{ u}_1 - \mathbf{b} = \mathcal{A} \mathbf{ u}_2 - \mathbf{b}  = \mathbf{0}$, it follows that
$\mathcal{A} ( \mathbf{u}_1 - \mathbf{u}_2  ) = \mathbf{0} $ in contradiction to the TRIP and 
$ \mathbf{ u}_1 - \mathbf{u}_2 \in  \mathcal{M}_{\leq2\mathbf{r}} $.

For Gaussian (or more generally subgaussian) measurement maps, 
the TRIP holds with high probability for both the HOSVD and the TT format  under a suitable bound on the number of measurements \cite{rss1,rss2}, which basically scales like the number of degrees of freedom of a tensor of multi-linear rank $\mathbf{r}$ (up to a logarithmic factor in $d$).
In order to state these results, let us introduce Gaussian measurement maps. A measurement map ${\cal A}: \mathcal{H}_d \rightarrow \mathbb{R}^m$ can be identified with a tensor in $\mathbb{R}^{m} \otimes \bigotimes_{i=1}^d \mathbb{R}^{n_i}$
via
\[
({\cal A}{\mathbf{u}})_\ell = \sum_{\mu_1=1}^{n_1} \sum_{\mu_2=1}^{n_2} \cdots \sum_{\mu_d=1}^{n_d} {\mathbf{a}}(\ell,\mu_1,\mu_2,\hdots,\mu_d) \mathbf{u}(\mu_1,\hdots,\mu_d), \quad \ell=1,\hdots,m \ .
\] 
If all entries of ${\cal A}$ are independent realizations of normal distributed random variables with mean zero and variance $1/m$, 
then ${\mathcal A}$ is called a {\it Gaussian measurement map}.  
\begin{theorem}[\cite{rss1,rss2}]\label{thm:TRIP:Gaussian} 
For $\delta, \varepsilon \in \left(0,1\right)$, a random draw of a Gaussian measurement map $\mathcal{A} : \mathcal{H}_d=\bigotimes_{i=1}^d \mathbb{R}^{n_i} \rightarrow \mathbb{R}^m$ satisfies $\delta_{\mathbf{r}}\leq \delta$ with probability at least $1-\varepsilon$ provided
\begin{itemize}
\item HOSVD format: $m \geq C \delta^{-2}\max \left\{\left({r}^d+d{n}{r}\right) \log (d), \log\left(\varepsilon^{-1}\right)\right\},$ 
\item TT format: $m \geq C \delta^{-2}\max \left\{\left(d{n}{r}^2\right) \log \left(d {r}\right), \log\left(\varepsilon^{-1}\right)\right\},$ 
\end{itemize}
where  ${n}=\max\left\{n_i: i =1,\ldots,d\right\}$ and ${r}=\max\left\{r_i: i =1,\ldots,d\right\}$ 
and $C>0$ is a  universal constant.
\end{theorem}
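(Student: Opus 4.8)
The plan is to establish the TRIP via the standard $\varepsilon$-net / concentration argument, adapted to the low-rank tensor manifold, exactly as one does for low-rank matrix recovery but with the covering number of $\mathcal{M}_{\mathbf{r}}$ (intersected with the unit sphere) in place of the matrix analogue. First I would fix $\delta,\varepsilon\in(0,1)$ and note that it suffices to control $\big|\,\|\mathcal{A}\mathbf{u}\|^2-\|\mathbf{u}\|^2\,\big|$ uniformly over the set $S_{\mathbf{r}}:=\{\mathbf{u}\in\mathcal{M}_{\mathbf{r}}:\|\mathbf{u}\|=1\}$. For a fixed unit-norm $\mathbf{u}$, $\|\mathcal{A}\mathbf{u}\|^2=\frac1m\sum_{\ell=1}^m g_\ell^2$ with $g_\ell=\langle \mathbf{a}(\ell,\cdot),\mathbf{u}\rangle$ i.i.d.\ standard Gaussian, so Bernstein's inequality for subexponential variables gives $\Pr\big(\big|\|\mathcal{A}\mathbf{u}\|^2-1\big|\ge t\big)\le 2\exp(-c m t^2)$ for $t\le 1$. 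This is the elementary ingredient; everything else is packaging.

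Next I would build an $\eta$-net $\mathcal{N}$ of $S_{\mathbf{r}}$ in the Euclidean norm. The key combinatorial estimate is a bound on $|\mathcal{N}|$: using the HOSVD (resp.\ HSVD/TT) parametrization, every $\mathbf{u}\in S_{\mathbf{r}}$ is the image under the multilinear contraction map $\tau$ of component tensors whose total number of entries is $D=\Theta(r^d+dnr)$ in the Tucker case and $D=\Theta(dnr^2)$ in the TT case; since $\tau$ restricted to suitably normalized (orthonormal-gauge) components is Lipschitz with a mild constant, a product of nets on the component spaces yields $|\mathcal{N}|\le (C/\eta)^{D}$, possibly with an extra factor that is polynomial in $d$ and $r$ inside the logarithm — this is the source of the $\log(d)$ resp.\ $\log(dr)$ factors in the statement. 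Then a union bound over $\mathcal{N}$ with $t=\delta/2$ requires $m\gtrsim \delta^{-2}\big(D\log(C/\eta)+\log(\varepsilon^{-1})\big)$, which matches the claimed bounds once $\eta$ is chosen as a small absolute constant (or $\eta\sim\delta$).

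The remaining step is the standard net-to-sphere transfer: one shows that the supremum of $\mathbf{u}\mapsto\big|\|\mathcal{A}\mathbf{u}\|^2-\|\mathbf{u}\|^2\big|$ over $S_{\mathbf{r}}$ is controlled by its value on $\mathcal{N}$ plus an error proportional to $\eta\cdot(1+\delta_{\mathbf{r}})$, using that the difference of two nearby unit-norm rank-$\mathbf{r}$ tensors lies in $\mathcal{M}_{\leq 2\mathbf{r}}$ (Proposition~\ref{prop:Riemannian}(c) and the discussion after Definition of the TRIP), so that $\mathcal{A}$ is already boundedly invertible there with high probability; absorbing this term (possible for $\eta$ small enough) closes the bootstrap and gives $\delta_{\mathbf{r}}\le\delta$. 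I expect the main obstacle to be the covering-number estimate for $\mathcal{M}_{\mathbf{r}}$: unlike the matrix case, where one nets the left/right singular factors and the singular values separately, here one must net a whole tree of transfer tensors and argue that the multilinear map $\tau$ does not blow up distances — this requires carefully fixing the gauge (orthonormality of all non-root component tensors) so that each factor lives on a bounded set and the Lipschitz constant of $\tau$ is controlled, and it is here that the precise exponents $r^d+dnr$ versus $dnr^2$ and the logarithmic factors are produced; I would refer to \cite{rss1,rss2} for the detailed bookkeeping.
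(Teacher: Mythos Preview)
The paper does not give its own proof of this theorem: it is quoted from \cite{rss1,rss2} and only commented on (extension to subgaussian maps, question about the logarithmic factor). There is therefore nothing in the present paper to compare your argument against.

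That said, your outline is the standard route and matches what one expects in \cite{rss1,rss2}: pointwise subexponential concentration of $\|\mathcal{A}\mathbf{u}\|^2$ for fixed unit $\mathbf{u}$, a covering-number bound for $S_{\mathbf{r}}=\mathcal{M}_{\leq\mathbf{r}}\cap\{\|\cdot\|=1\}$ obtained by netting the component tensors in orthonormal gauge (this is where the exponent $r^d+dnr$ resp.\ $dnr^2$ and the $\log d$ resp.\ $\log(dr)$ factors arise), a union bound, and a net-to-sphere transfer. Two small remarks. First, you cite Proposition~\ref{prop:Riemannian}(c) for ``$\mathbf{u}-\mathbf{u}_0\in\mathcal{M}_{\leq 2\mathbf{r}}$'', but that proposition concerns tangent vectors; what you actually need is the elementary fact (used verbatim in the paper just after the TRIP definition) that $\mathbf{u}+\mathbf{v}\in\mathcal{M}_{\leq 2\mathbf{r}}$ for $\mathbf{u},\mathbf{v}\in\mathcal{M}_{\leq\mathbf{r}}$, which follows from subadditivity of matrix rank in each matricisation. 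Second, the bootstrap in the transfer step is the only place where tensors genuinely differ from matrices: unlike the matrix case one cannot in general split a multilinear-rank-$2\mathbf{r}$ tensor orthogonally into two rank-$\mathbf{r}$ pieces, so the clean $B_{2r}\le\sqrt{2}\,B_r$ trick is unavailable. The workaround you allude to---telescoping $\tau(p)-\tau(p_0)$ along the components so that $\mathbf{u}-\mathbf{u}_0$ is a sum of $O(d)$ tensors each in $\mathcal{M}_{\leq\mathbf{r}}$---is exactly what forces the net resolution to scale with $d$ (or $dr$ in the TT case) and produces the logarithmic factor; you identify this correctly as the main obstacle.
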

The above result extends to subgaussian and in particular to Bernoulli measurement maps, see e.g.\ \cite{rauhut,Vershynin} for the definition
of subgaussian random variables and matrices.
Presently, it is not clear whether the logarithmic factor in $d$ above is necessary or whether it is an artefact of the proof.
We conjecture that similar bounds hold also for the general hierarchical tensor format.
We note that 
in practice, the application of Gaussian sensing operators acting on the tensor product space $\mathcal{H}_d$ seems to be  
computationally too expensive
except for  relatively small  dimensions $d$,  (e.g. $d=2,3,4$), and small $n_i$.     
A more realistic measurement map for which TRIP bounds can be shown \cite{rss2} is the decomposition
of random sign flips of the tensor entries, a $d$-dimensional Fourier transform and random subsampling.
All these operations can be performed quickly (exploiting) the FFT.  For further  details we refer to \cite{rss1,rss2}. 
 
We finally remark that the TRIP does not hold in the tensor completion setup because sparse and low rank tensor may belong to the kernel
of the measurement map. In this scenario, additional incoherence properties on the tensor to be recovered like in the matrix completion scenario \cite{care09,gr11,re12} are probably necessary.

\subsection{Convergence results} 

Unfortunately, a full convergence (and recovery) analysis of the TIHT and RGI algorithms under the TRIP is not yet available.
Nevertheless, we present two partial results. The first concerns the local convergence of the RGI and the second 
is a convergence analysis of the TIHT under an additional assumption on the iterates.

We assume that $\mathbf{u} \in {\mathcal M}_{\mathbf{r}}$, where the low rank tensor manifold is associated to a fixed 
hierarchical tensor format. Measurements are given by
\[
{\mathbf{b}} = {\mathcal A} \mathbf{u}\ ,
\]
where $\mathcal{A}$ is assumed to satisfy the TRIP of order $3 \mathbf{r}$ below.
Recall that our projected gradient scheme starts with an initial guess $\mathbf{u}^0$ and forms the iterates
\begin{align} \label{eq:yn}
 \mathbf{y}^{n+1} & :=  \mathbf{u}^n   +  \mathcal{A}^* ( \mathbf{b} - \mathcal{A} \mathbf{u}^n ) \ , \\
\mathbf{u}^{n+1} &: = \mathcal{R} ( \mathbf{y}^{n+1})  \ , \label{eq:un}
\end{align}
where either $ \mathcal{R} (\mathbf{u}^{n+1} ) : = \mathbf{H}_{\mathbf{r}} \left(\mathbf{y}^{n+1}\right)$ (TIHT) or 
$  \mathcal{R} (\mathbf{u}^{n+1} )  : = R \circ P_{\mathcal{T}_{\mathbf{u}^n}} \mathbf{y}^{n+1}  \ $ (RGI).

We first show local convergence, in the sense that the iterates ${\mathbf{u}}^n$ converge
to the original tensor $\mathbf{u}$ if the initial guess is sufficiently close to 
the solution $\mathbf{u} \in \mathcal{M}_{\mathbf{r}}$. Of course, this analysis also applies if one of the later iterates
comes close enough to $\mathbf{u}$.

\begin{theorem}[Local convergence] \label{th:local}
Let $\mathbf{b} =\mathcal{A}\mathbf{u}$ for $\mathbf{u} \in \mathcal{M}_{\leq\mathbf{r}}$ and let ${\mathbf{u}^n}$
be the iterates  \eqref{eq:yn}, \eqref{eq:un} of the Riemannian gradient iterations, i.e., 
$\mathcal{R} (\mathbf{u}^{n+1} )  : = R \circ P_{\mathcal{T}_{\mathbf{u}^n}} \mathbf{y}^{n+1}$, where
$R$ is a retraction.
In addition, let's assume that $ \mathcal{A} $ satisfies the TRIP of order ${3 \mathbf{r}}$,    
i.e., $\delta_{3\mathbf{r}} \leq \delta < 1$. 
 Suppose that  
 $$ \| \mathbf{u} - \mathbf{u}^0 \| \leq  \varepsilon$$ is sufficiently small and the distance
 to the singular points $ \varepsilon <  \mbox{dist} ( \mathbf{u}^0, \partial \mathcal{M}_{\bf r} )$ is sufficiently large.
 Then, there exists $ 0 < \rho <1 $ (depending on $\delta$ and $\varepsilon$) such that    
the series 
$ \mathbf{u}^n \in \mathcal{M}_{\leq \mathbf{r}}$ convergences linearly
 to $\mathbf{u}\in \mathcal{M}_{\leq \mathbf{r}}$ with rate $\rho$, 
$$
\| \mathbf{u}^{n+1}   - \mathbf{u} \| \leq  \rho \| \mathbf{u}^n - \mathbf{u}\|. $$
\end{theorem}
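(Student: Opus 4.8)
The plan is to mimic the standard proof of convergence of projected gradient descent (iterative hard thresholding) under the restricted isometry property, as in the matrix recovery case, but with careful handling of the extra error terms that arise because the retraction is only an approximate projection onto $\mathcal{M}_{\mathbf{r}}$ and because of the quadratic retraction error \eqref{retract:prop1}. First I would set up the one-step recursion. Write $\mathbf{y}^{n+1} = \mathbf{u}^n + \mathcal{A}^*(\mathbf{b} - \mathcal{A}\mathbf{u}^n) = \mathbf{u}^n + \mathcal{A}^*\mathcal{A}(\mathbf{u} - \mathbf{u}^n)$, so that $\mathbf{y}^{n+1} - \mathbf{u} = (\mathrm{Id} - \mathcal{A}^*\mathcal{A})(\mathbf{u}^n - \mathbf{u})$. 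Since $\mathbf{u}^n, \mathbf{u} \in \mathcal{M}_{\leq \mathbf{r}}$ we have $\mathbf{u}^n - \mathbf{u} \in \mathcal{M}_{\leq 2\mathbf{r}}$, and I would project this identity onto appropriate low-rank-plus-tangent subspaces. The key estimate is that for tensors supported on a fixed low-rank model set of rank bounded by $3\mathbf{r}$, the operator $\mathrm{Id} - \mathcal{A}^*\mathcal{A}$ acts with norm at most $\delta_{3\mathbf{r}} \leq \delta$; this is the standard consequence of the TRIP and follows by polarization from \eqref{eq:TRIP} exactly as in the matrix case.

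Next I would track the retraction. We have $\mathbf{z}^{n+1} = P_{\mathcal{T}_{\mathbf{u}^n}}\mathbf{y}^{n+1}$ and $\boldsymbol{\xi}^n = \mathbf{z}^{n+1} - \mathbf{u}^n \in \mathcal{T}_{\mathbf{u}^n}$, then $\mathbf{u}^{n+1} = R(\mathbf{u}^n, \boldsymbol{\xi}^n)$. The plan is a triangle-inequality split:
\begin{align*}
\|\mathbf{u}^{n+1} - \mathbf{u}\| &\leq \|\mathbf{u}^{n+1} - (\mathbf{u}^n + \boldsymbol{\xi}^n)\| + \|\mathbf{u}^n + \boldsymbol{\xi}^n - \mathbf{u}\| \\
&\leq C_R \|\boldsymbol{\xi}^n\|^2 + \|\mathbf{z}^{n+1} - \mathbf{u}\|,
\end{align*}
using the retraction property \eqref{retract:prop1} for the first term. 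For the second term, since $\mathbf{u} \in \mathcal{T}_{\mathbf{u}^n}$ up to a small perturbation is \emph{not} generally true, I would instead use Proposition~\ref{prop:Riemannian}(b) only for $\mathbf{u}^n$ itself and estimate $\|\mathbf{z}^{n+1} - \mathbf{u}\| = \|P_{\mathcal{T}_{\mathbf{u}^n}}\mathbf{y}^{n+1} - \mathbf{u}\|$ by inserting $\mathbf{u}^n$: since $P_{\mathcal{T}_{\mathbf{u}^n}}$ is an orthogonal projection fixing $\mathbf{u}^n$, write $\mathbf{z}^{n+1} - \mathbf{u}^n = P_{\mathcal{T}_{\mathbf{u}^n}}(\mathbf{y}^{n+1} - \mathbf{u}^n) = P_{\mathcal{T}_{\mathbf{u}^n}}\mathcal{A}^*\mathcal{A}(\mathbf{u} - \mathbf{u}^n)$. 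Combining with $\mathbf{u} - \mathbf{u}^n$ and projecting, and noting that all tensors involved ($\mathbf{u}^n$, $\mathbf{u}$, and elements of $\mathcal{T}_{\mathbf{u}^n}$) lie in a common low-rank model set of rank at most $3\mathbf{r}$ (here Proposition~\ref{prop:Riemannian}(c) giving tangent vectors rank $\leq 2\mathbf{r}$, plus $\mathbf{u}$ of rank $\leq \mathbf{r}$ is what forces the order $3\mathbf{r}$ in the hypothesis), I would obtain $\|\mathbf{z}^{n+1} - \mathbf{u}\| \leq (\text{const}\cdot\delta)\|\mathbf{u}^n - \mathbf{u}\| + (\text{error from } \mathbf{u}\notin \mathcal{T}_{\mathbf{u}^n})$. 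The term measuring that $\mathbf{u}$ need not lie in $\mathcal{T}_{\mathbf{u}^n}$ is $\|(\mathrm{Id} - P_{\mathcal{T}_{\mathbf{u}^n}})\mathbf{u}\| = \|(\mathrm{Id} - P_{\mathcal{T}_{\mathbf{u}^n}})(\mathbf{u} - \mathbf{u}^n)\|$ (using (b)), which is $\mathcal{O}(\|\mathbf{u} - \mathbf{u}^n\|^2/\mathrm{dist}(\mathbf{u}^n, \partial\mathcal{M}_{\mathbf{r}}))$ by the curvature estimates of \cite{lrsv}; this is precisely why the hypothesis requires $\varepsilon$ small relative to the distance to the singular set.

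Finally I would close the induction. Collecting terms, the one-step bound has the shape
\[
\|\mathbf{u}^{n+1} - \mathbf{u}\| \leq c_1\delta\,\|\mathbf{u}^n - \mathbf{u}\| + c_2\,\frac{\|\mathbf{u}^n - \mathbf{u}\|^2}{\mathrm{dist}(\mathbf{u}^n,\partial\mathcal{M}_{\mathbf{r}})} + c_3\,\|\mathbf{u}^n-\mathbf{u}\|^2,
\]
with $c_1 < 1$ when $\delta$ is small enough (using a steplength $\alpha_n = 1$ as in \eqref{eq:yn}; if $c_1\delta$ with $\alpha_n=1$ is not automatically $<1$ one rescales $\mathcal{A}$ or picks $\alpha_n$, but under TRIP with small $\delta$ the constant works out). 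Choosing $\varepsilon = \|\mathbf{u}^0 - \mathbf{u}\|$ so small that $c_2\varepsilon/\mathrm{dist}(\mathbf{u}^0,\partial\mathcal{M}_{\mathbf{r}}) + c_3\varepsilon \leq (1 - c_1\delta)/2 =: \rho - c_1\delta$, one gets $\|\mathbf{u}^{n+1} - \mathbf{u}\| \leq \rho\|\mathbf{u}^n - \mathbf{u}\|$ with $\rho = (1+c_1\delta)/2 < 1$, \emph{provided} $\mathbf{u}^{n+1}$ stays in the region where the curvature bound and $c_1<1/\delta$ hold; but $\|\mathbf{u}^{n+1}-\mathbf{u}\| \leq \rho\varepsilon < \varepsilon$ keeps the iterate in the prescribed neighborhood and bounded away from $\partial\mathcal{M}_{\mathbf{r}}$, so the induction propagates. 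The main obstacle, and the place where real care is needed, is exactly the retraction/curvature term: controlling $\|(\mathrm{Id} - P_{\mathcal{T}_{\mathbf{u}^n}})(\mathbf{u} - \mathbf{u}^n)\|$ and $\|\boldsymbol{\xi}^n\|^2$ uniformly requires that the iterates never approach the singular set $\partial\mathcal{M}_{\mathbf{r}}$, where the manifold's curvature blows up; this is what makes the result only \emph{local} and forces the twin smallness/largeness hypotheses on $\varepsilon$ versus $\mathrm{dist}(\mathbf{u}^0,\partial\mathcal{M}_{\mathbf{r}})$. The TRIP part of the argument, by contrast, is essentially a routine transcription of the matrix IHT analysis once one observes that all relevant differences live in $\mathcal{M}_{\leq 3\mathbf{r}}$.
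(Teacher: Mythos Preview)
Your proposal is correct and follows essentially the same approach as the paper: a triangle-inequality split into a retraction error term (bounded quadratically via \eqref{retract:prop1}), a projected-gradient term (contracted by the TRIP of order $3\mathbf{r}$, using Proposition~\ref{prop:Riemannian}(c) to place tangent vectors in $\mathcal{M}_{\leq 2\mathbf{r}}$), and a term measuring $\|(\mathrm{Id}-P_{\mathcal{T}_{\mathbf{u}^n}})(\mathbf{u}-\mathbf{u}^n)\|$ controlled by the curvature estimates of \cite{lrsv}. The only cosmetic difference is that the paper groups the pieces into three terms rather than two and, for the tangent-projection error, uses the linear bound $\sqrt{1-\gamma^{-2}}\,\|\mathbf{u}^n-\mathbf{u}\|$ from the equivalence estimate rather than the quadratic curvature bound you invoke; both come from the same source and either closes the argument.
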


\begin{proof}
 We consider
 the orthogonal projection $ P_{\mathcal{T}_{\mathbf{u}^n}   }$ onto the tangent space $\mathcal{T}_{\mathbf{u}^n}  $.
 There exists   $1<  \gamma = \gamma  (\varepsilon) $ and $   \kappa >0 $ depending on the curvature of 
 $\mathcal{M}_{\mathbf{r}}$,
 such that, for all $ \|  \mathbf{v} - \mathbf{u}^n \| < \varepsilon $, it holds that \cite{lrsv}
 \begin{align}
\gamma^{-1} \   \|\mathbf{u}^n -  \mathbf{v}  \|  & \leq   \| P_{\mathcal{T}_{\mathbf{u}^n} }
(\mathbf{u}^n -    \mathbf{v} ) \| 
 \leq \gamma \  \|\mathbf{u}^n -  \mathbf{v}  \|  \label{eq:equiv1} \\ 
 \| ( I -  P_{\mathcal{T}_{\mathbf{u}^n} })  ( \mathbf{u}^n  -    \mathbf{v})   \|  & \leq  
 \kappa \   \| P_{\mathcal{T}_{\mathbf{u}^n} } ( \mathbf{u} -    \mathbf{v})   \|^2   \ .  \label{eq:square}
\end{align}
Using the triangle inequality we estimate
\begin{align}
& \left\| \mathbf{u}^{n+1} - \mathbf{u}\right\|  =  
\left\| R\left(\mathbf{u}^n, P_{\mathcal{T}_{\mathbf{u}^n}}\mathbf{y}^{n+1} -\mathbf{u}^n \right) - \mathbf{u}\right\| \nonumber \\
&\leq \left\| R\left(\mathbf{u}^n, P_{\mathcal{T}_{\mathbf{u}^n}}\mathbf{y}^{n+1} -\mathbf{u}^n \right) - P_{\mathcal{T}_{\mathbf{u}^n}}\mathbf{y}^{n+1} \right\|  
+ \left\| P_{\mathcal{T}_{\mathbf{u}^n}}\mathbf{y}^{n+1} - P_{\mathcal{T}_{\mathbf{u}^n}}\mathbf{u} \right\|  + \left\| P_{\mathcal{T}_{\mathbf{u}^n}}\mathbf{u}-\mathbf{u} \right\|.
\label{3terms}
\end{align}
We will bound each of the three terms in \eqref{3terms} separately.
We start with the first term, where we exploit the property \eqref{retract:prop1} of retractions.
Moreover, 
$W^n  :=\mathcal{T}_{\mathbf{u}^n}
\subset  \mathcal{M}_{\leq  2 \mathbf{r}} $ by Proposition~\ref{prop:Riemannian}(c)
and  $P_{\mathcal{T}_{\mathbf{u}^n}}\mathbf{y}^{n+1} = P_{\mathcal{T}_{\mathbf{u}^n}} \mathbf{u}^n + P_{\mathcal{T}_{\mathbf{u}^n}} \mathbf{u}^n
{\mathcal A}^* {\mathcal A}(\mathbf{u} - \mathbf{u}^n) = \mathbf{u}^n + P_{\mathcal{T}_{\mathbf{u}^n}} 
{\mathcal A}^* {\mathcal A}(\mathbf{u} - \mathbf{u}^n)$ because $P_{\mathcal{T}_{\mathbf{u}^n}}  \mathbf{u}^n  = \mathbf{u}^n$
by Proposition~\ref{prop:Riemannian}(a).
Since $\mathbf{u} - \mathbf{u}^n \in {\cal M}_{\leq 2 \mathbf{r}}$ we may apply the TRIP of order $2{\mathbf{r}} < 3 \mathbf{r}$ to obtain
\begin{align}
&\left\| R\left(\mathbf{u}^n, P_{\mathcal{T}_{\mathbf{u}^n}}\mathbf{y}^{n+1} -\mathbf{u}^n \right) - P_{\mathcal{T}_{\mathbf{u}^n}}\mathbf{y}^{n+1} \right\| \leq  C \left\| P_{\mathcal{T}_{\mathbf{u}^n}}\mathbf{y}^{n+1}-\mathbf{u}^n\right\|^2  \nonumber\\
 &= C \left\| P_{\mathcal{T}_{\mathbf{u}^n}} \mathcal{A}^* \mathcal{A} \left(\mathbf{u}-\mathbf{u}^n\right)\right\|^2  \leq C \left( 1+\delta_{3\mathbf{r}}\right)^2 \varepsilon \left\| \mathbf{u}- \mathbf{u}^n\right\|, \label{jed1}
\end{align}
where in the last estimate we  also used 
that $\left\| \mathbf{u}- \mathbf{u}^n \right\| \leq \varepsilon$. 

For the second term in \eqref{3terms} 
observe that
$(I - P_{\mathcal{T}_{\mathbf{u}^n}} )  ( \mathbf{u}   - \mathbf{u}^n ) = 
\mathbf{u} - P_{\mathcal{T}_{\mathbf{u}^n}}\mathbf{u} \in  \mathcal{M}_{\leq 3 \mathbf{r}}$ by Proposition~\ref{prop:Riemannian}.   
The TRIP implies therefore that the  spectrum of 
 $P_{\mathcal{T}_{\mathbf{u}^n} } ( I -   \mathcal{A}^* \mathcal{A})|_{W^n}  $ is  contained in the interval
 $ [- \delta_{3 \mathbf{r}} ,  \delta_{3 \mathbf{r}} ] $. 
With these observations and \eqref{eq:square} we obtain
\begin{eqnarray*}
  \| P_{\mathcal{T}_{\mathbf{u}^n} } (\mathbf{y}^{n+1} - \mathbf{u} )   \| 
      & = &    \|   P_{\mathcal{T}_{\mathbf{u}^n} }   \big(( \mathbf{u}  - \mathbf{u}^n  ) -  \mathcal{A}^* ( \mathbf{b} 
    - \mathcal{A} \mathbf{u}^n  
    ) \big) \|
\\
        & \leq  &    \|  P_{\mathcal{T}_{\mathbf{u}^n}  } \big( ( \mathbf{u}   - \mathbf{u}^n )  -  \mathcal{A}^* \mathcal{A}  \big( P_{\mathcal{T}_{\mathbf{u}^n}  } ( \mathbf{u}   - \mathbf{u}^n ) \big)   \big)\|  \\
        & + &  \|  P_{\mathcal{T}_{\mathbf{u}^n} }   \mathcal{A}^* \mathcal{A}  
        \big( (I - P_{\mathcal{T}_{\mathbf{u}^n} })  ( \mathbf{u}   - \mathbf{u}^n ) \big) \|
\\
        & \leq  &      \|   P_{\mathcal{T}_{\mathbf{u}^n} }  \big( ( \mathbf{u}   - \mathbf{u}^n )    -  
          \mathcal{A}^* \big( \mathcal{A} P_{\mathcal{T}_{\mathbf{u}^n}}   ( \mathbf{u}   - \mathbf{u}^n ) \big) \big) \| \\
&       +&  (1 + \delta_{3\mathbf{r}} )   \|  (I - P_{\mathcal{T}_{\mathbf{u}^n} }) ( \mathbf{u}  - \mathbf{u}^n)  \|   \ 
       \\
        & \leq  &      \|   P_{\mathcal{T}_{\mathbf{u}^n} } 
        \left(  I-\mathcal{A}^*  \mathcal{A} P_{\mathcal{T}_{\mathbf{u}^n} }\right)   ( \mathbf{u}   - \mathbf{u}^n )   \|\\
       &+ &  (1+\delta_{3\mathbf{r}}) \kappa \|   P_{\mathcal{T}_{\mathbf{u}^n} }( \mathbf{u}  - \mathbf{u}^n ) \|^2   \\
       & \leq & \delta_{3\mathbf{r}} \| P_{\mathcal{T}_{\mathbf{u}^n} } (\mathbf{u} - \mathbf{u}^n)\| 
       + (1+\delta_{3\mathbf{r}}) \kappa \|P_{\mathcal{T}_{\mathbf{u}^n} }(\mathbf{u} - \mathbf{u}^n)\|^2.
       \end{eqnarray*}
 Hence, for $\varepsilon$ sufficiently small, there exists a  factor  $ 0 <  \tilde{\rho}  < 1$ such that 
       \begin{equation} \label{jed2}
          \|  P_{\mathcal{T}_{\mathbf{u}^n} }  (\mathbf{y}^{ n+1}  - \mathbf{u}^n ) \|   
                \leq  \tilde{\rho}     \|  P_{\mathcal{T}_{\mathbf{u}^n} }  (\mathbf{u} - \mathbf{u}^n ) \|  \ . 
        \end{equation}
For the third term in \eqref{3terms}, first notice that by the Pythagorean theorem 
\begin{equation*}
\left\| \mathbf{u}^n-\mathbf{u} \right\|^2= \left\| \mathbf{u}^n- P_{\mathcal{T}_{\mathbf{u}^n}}\mathbf{u} \right\|^2 + \left\| P_{\mathcal{T}_{\mathbf{u}^n}}\mathbf{u}-\mathbf{u} \right\|^2.
\end{equation*}
Since $P_{\mathcal{T}_{\mathbf{u}^n}}\mathbf{u}^n=\mathbf{u}^n$, using \eqref{eq:equiv1} one obtains
\begin{equation} \label{jed3}
\left\|  P_{\mathcal{T}_{\mathbf{u}^n}}\mathbf{u} - \mathbf{u} \right\| \leq \sqrt{1-\gamma^{-2}} \left\| \mathbf{u}^n-\mathbf{u}\right\|.
\end{equation}
Combining the estimates \eqref{jed1}, \eqref{jed2} and \eqref{jed3} yields
\begin{equation*}
\left\| \mathbf{u}^{n+1} - \mathbf{u} \right\| \leq \rho \left\| \mathbf{u}-\mathbf{u}^n\right\|,
\end{equation*}
where $\rho=C\left(1+\delta_{3\mathbf{r}}\right)\varepsilon +  \tilde{\rho} +\sqrt{1-\gamma^{-2}} <1 $ for $\varepsilon$ and $\kappa$ small and $\gamma$ close enough to $1$. 
Consequently, 
 the sequence  $\mathbf{u}^n $ converges linearly to $\mathbf{u}$. 
\end{proof}

The weak point of the previous theorem is that this convergence can be guaranteed only in a very narrow neighborhood of the solution. 
To obtain global convergence, now for TIHT, we ask for an additional assumption on the iterates.

\begin{theorem} [Conditionally global convergence] \label{thm:conditional}
Let $\mathbf{b} =\mathcal{A}\mathbf{u}$ for $\mathbf{u} \in \mathcal{M}_{\leq\mathbf{r}}$ and let ${\mathbf{u}^n}$
be the iterates  \eqref{eq:yn}, \eqref{eq:un} of the tensor iterative hard thresholding algorithm, i.e.,
$\mathcal{R} (\mathbf{u}^{n+1} ) : = \mathbf{H}_{\mathbf{r}} \left(\mathbf{y}^{n+1}\right)$.
In addition, let's assume that $ \mathcal{A} $ satisfies the TRIP of order ${3 \mathbf{r}}$,    
i.e., $\delta_{3\mathbf{r}} \leq 1/2$. 
We further assume that the iterates satisfy, for all $n \in \mathbb{N}$,
\begin{equation}\label{condA}
\| \mathbf{u}^{n+1} -  \mathbf{y}^{n+1} \|  \leq \| \mathbf{u} -    \mathbf{y}^{n+1} \| .
\end{equation}
Then the sequence
$ \mathbf{u}^n \in \mathcal{M}_{\leq \mathbf{r}}$ converges linearly
to a unique solution $\mathbf{u}\in \mathcal{M}_{\leq \mathbf{r}}$ with rate 
$ \rho  < 1 $, i.e.,
$$
\| \mathbf{u}^{n+1}   - \mathbf{u} \| \leq  \rho \| \mathbf{u}^n - \mathbf{u}\| .
$$
\end{theorem}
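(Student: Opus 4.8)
The plan is to run the classical iterative hard thresholding argument, the twist being that the quasi-optimality gap $C(d)$ of $\mathbf{H}_{\mathbf{r}}$ from Theorem~\ref{thm:quasi} never has to be quantified, because assumption \eqref{condA} inserts by hand exactly the inequality a genuine best rank-$\mathbf{r}$ projection would provide. Throughout I abbreviate $\mathbf{h}^n := \mathbf{u}^n - \mathbf{u}$; since $\mathbf{u}^n,\mathbf{u}\in\mathcal{M}_{\leq\mathbf{r}}$ and matricisation ranks are subadditive, $\mathbf{h}^n\in\mathcal{M}_{\leq 2\mathbf{r}}$. Because $\mathbf{b}=\mathcal{A}\mathbf{u}$, the gradient step rewrites as $\mathbf{y}^{n+1}-\mathbf{u}=(\mathbf{u}^n-\mathbf{u})-\mathcal{A}^*\mathcal{A}(\mathbf{u}^n-\mathbf{u})=(I-\mathcal{A}^*\mathcal{A})\mathbf{h}^n$.

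First I would record a bilinear form of the TRIP: if $\mathbf{x},\mathbf{y}\in\mathcal{H}_d$ are such that both $\mathbf{x}+\mathbf{y}$ and $\mathbf{x}-\mathbf{y}$ lie in $\mathcal{M}_{\leq k\mathbf{r}}$, then $|\langle\mathcal{A}\mathbf{x},\mathcal{A}\mathbf{y}\rangle-\langle\mathbf{x},\mathbf{y}\rangle|\leq\delta_{k\mathbf{r}}\,\|\mathbf{x}\|\,\|\mathbf{y}\|$. This follows from the polarisation identity $\langle\mathcal{A}\mathbf{x},\mathcal{A}\mathbf{y}\rangle-\langle\mathbf{x},\mathbf{y}\rangle=\tfrac14\big(\|\mathcal{A}(\mathbf{x}+\mathbf{y})\|^2-\|\mathbf{x}+\mathbf{y}\|^2\big)-\tfrac14\big(\|\mathcal{A}(\mathbf{x}-\mathbf{y})\|^2-\|\mathbf{x}-\mathbf{y}\|^2\big)$, by applying \eqref{eq:TRIP} to the two low-rank tensors $\mathbf{x}\pm\mathbf{y}$ to bound the right-hand side by $\tfrac{\delta_{k\mathbf{r}}}{2}(\|\mathbf{x}\|^2+\|\mathbf{y}\|^2)$, and then replacing $(\mathbf{x},\mathbf{y})$ by $(t\mathbf{x},t^{-1}\mathbf{y})$ and optimising over $t>0$; the left-hand side is scale invariant, so the bound improves to $\delta_{k\mathbf{r}}\|\mathbf{x}\|\,\|\mathbf{y}\|$.

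Next comes the contraction estimate. Using \eqref{condA} and $\mathbf{u}^{n+1}=\mathbf{u}+\mathbf{h}^{n+1}$, I expand $\|\mathbf{u}^{n+1}-\mathbf{y}^{n+1}\|^2\leq\|\mathbf{u}-\mathbf{y}^{n+1}\|^2$ about $\mathbf{u}$; the $\|\mathbf{u}-\mathbf{y}^{n+1}\|^2$ terms cancel and one is left with $\|\mathbf{h}^{n+1}\|^2\leq 2\langle\mathbf{h}^{n+1},\mathbf{y}^{n+1}-\mathbf{u}\rangle=2\langle\mathbf{h}^{n+1},(I-\mathcal{A}^*\mathcal{A})\mathbf{h}^n\rangle$. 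Now $\mathbf{h}^{n+1}+\mathbf{h}^n=\mathbf{u}^{n+1}+\mathbf{u}^n-2\mathbf{u}\in\mathcal{M}_{\leq 3\mathbf{r}}$ and $\mathbf{h}^{n+1}-\mathbf{h}^n=\mathbf{u}^{n+1}-\mathbf{u}^n\in\mathcal{M}_{\leq 2\mathbf{r}}\subset\mathcal{M}_{\leq 3\mathbf{r}}$, so the bilinear TRIP with $k=3$ applies and gives $\langle\mathbf{h}^{n+1},(I-\mathcal{A}^*\mathcal{A})\mathbf{h}^n\rangle\leq\delta_{3\mathbf{r}}\|\mathbf{h}^{n+1}\|\,\|\mathbf{h}^n\|$. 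Dividing by $\|\mathbf{h}^{n+1}\|$ (the case $\mathbf{h}^{n+1}=\mathbf{0}$ being trivial) yields $\|\mathbf{u}^{n+1}-\mathbf{u}\|\leq 2\delta_{3\mathbf{r}}\|\mathbf{u}^n-\mathbf{u}\|$, i.e.\ linear convergence with rate $\rho:=2\delta_{3\mathbf{r}}<1$ under $\delta_{3\mathbf{r}}\leq 1/2$. For uniqueness I argue as in the paragraph preceding the theorem: two tensors in $\mathcal{M}_{\leq\mathbf{r}}$ consistent with $\mathbf{b}$ differ by an element of $\mathcal{M}_{\leq 2\mathbf{r}}$ in $\ker\mathcal{A}$, which \eqref{eq:TRIP} of order $2\mathbf{r}\leq 3\mathbf{r}$ forbids.

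The routine parts are the two algebraic expansions (polarisation, and completing the square from \eqref{condA}). The step that needs care is the rank bookkeeping: recognising that the combined tensors $\mathbf{u}^{n+1}+\mathbf{u}^n-2\mathbf{u}$ and $\mathbf{u}^{n+1}-\mathbf{u}^n$ have multilinear rank at most $3\mathbf{r}$ by subadditivity of matricisation ranks is precisely what forces the TRIP to be of order $3\mathbf{r}$ (and no more). The genuine conceptual obstacle — which here is hidden in the hypotheses rather than being a difficulty inside the proof — is that the quasi-optimality bound of $\mathbf{H}_{\mathbf{r}}$ in Theorem~\ref{thm:quasi} does \emph{not} imply \eqref{condA}, which is why the statement is only a conditional one; removing this assumption is exactly the gap that keeps the convergence analysis partial.
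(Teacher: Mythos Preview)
The paper does not prove this theorem in the text; it defers to \cite{rss2}. Your argument is the standard iterative-hard-thresholding contraction proof (cf.\ \cite{blda08,tanner}) transplanted to the hierarchical tensor setting, it is essentially correct, and it is almost certainly what \cite{rss2} contains.

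One point deserves tightening. Your bilinear TRIP lemma asserts the product bound $\delta_{k\mathbf r}\|\mathbf x\|\,\|\mathbf y\|$ from the hypothesis $\mathbf x\pm\mathbf y\in\mathcal M_{\le k\mathbf r}$, but the scaling step you invoke needs $t\mathbf x\pm t^{-1}\mathbf y\in\mathcal M_{\le k\mathbf r}$ for the optimising $t$, and this is \emph{not} implied by the $t=1$ case alone because $\mathcal M_{\le k\mathbf r}$ is not a subspace. In your application the gap is harmless: $\mathbf h^{n+1}$ and $\mathbf h^n$ are each linear combinations of the three rank-$\le\mathbf r$ tensors $\mathbf u^{n+1},\mathbf u^n,\mathbf u$, so \emph{every} linear combination $\alpha\mathbf h^{n+1}+\beta\mathbf h^n$ lies in $\mathcal M_{\le 3\mathbf r}$ by subadditivity of matricisation ranks. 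State the lemma with that stronger hypothesis (or simply run the polarisation directly on $t\mathbf h^{n+1}\pm t^{-1}\mathbf h^n$) and the argument is clean. A last remark: your rate $\rho=2\delta_{3\mathbf r}$ equals $1$ at the endpoint $\delta_{3\mathbf r}=1/2$, so strict linear convergence actually requires $\delta_{3\mathbf r}<1/2$; this is a wrinkle in the theorem's hypothesis rather than a flaw in your proof.
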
 
For details of the proof, we refer to \cite{rss2}. We note that the above result can be extended to robustness under noise on the measurements
and to tensors being only approximately of low rank, i.e., being close to ${\mathcal M}_{\mathbf{r}}$ but not 
necessarily on ${\mathcal M}_{\mathbf{r}}$.

Let us comment on the essential condition \eqref{condA}. 
In the case that ${\mathcal R}$ computes the best rank ${\mathbf{r}}$ approximation then this condition
holds since 
$$ \inf\{\| \mathbf{v} - \mathbf{y}^{n+1}\| : \mathbf{v} \in \mathcal{M}_{ \leq \mathbf{r}} \} \leq \| \mathbf{u} - \mathbf{y}^{n+1} \| $$ 
is trivially true. However, the best approximate is not numerically available and the truncated HSVD
only ensures the worst case error estimate
$$ \|  \mathbf{u}^{n+1} - \mathbf{y}^{n+1} \| \leq C(d)  \inf \{\| \mathbf{v} - \mathbf{y}^{n+1} \| : \mathbf{v} \in \mathcal{M}_{ \leq \mathbf{r}} \}. $$
Nevertheless, in practice this bound may be pessimistic for a generic tensor so that \eqref{condA} may be likely to hold.
In any case, the above theorem may at least {\it explain} why we observe recovery by TIHT in practice.

\subsection{Alternating least   squares  scheme {(ALS)}} 

An efficient and fairly simple method for computing (at least a local) minimizer of  
$\|\mathbf{u} - \mathbf{v}\|$ subject to 
$\mathbf{v} \in {\mathcal M}_{\mathbf{r}}$ is based on {\em alternating least squares  (ALS)}, which is 
a variant of block Gau\ss{}-Seidel optimization.  
In contrast to poor convergence experienced with the canonical format
 (CANDECOMP, PARAFAC) \cite{kolda}, 
ALS implemented appropriately  in the hierarchical formats
 has been observed  to be 
surprisingly powerful \cite{hrs-als}.  
Furthermore, and quite importantly,  it is robust with respect to over-fitting, i.e., allows optimization  in
the set  $\mathcal{M}_{\leq {\bf r}}$ \cite{hrs-als}. 
As a local optimization scheme, like the Riemannian optimization it converges only  to  a local minimum at best. 
This scheme applied to TT tensors is basically  a one-site DMRG (density matrix renormalization group)
 algorithm introduced for quantum spin systems in \cite{white,schollwoeck}.
 The basic idea  for computing $  \mathbf{u} = \tau ( \{ \mathbf{b}_{\alpha} : \alpha \in \mathbb{T} \}) $ by   the ALS or 
 block Gau\ss{}-Seidel method is  to compute the required components $ \mathbf{b}_{\alpha} $, 
one after each other.  
 Fixing the components  $ \mathbf{b}_{\alpha}  $, $ \alpha \in \mathbb{T} $, $ \alpha \not= t$, 
only the component $\mathbf{b}_t $  is left to be optimized in each iteration step. 
Before passing to the next iteration step,
the new iterate has to be transformed into the normal form $ \mathbf{b}_{t}^{n+1}$ by orthogonalization e.g.\ 
  by applying an SVD (without truncation)  
  or simply by  QR factorization. 

Let us assume that the indices $ \alpha \in \mathbb{T}$ are in a linear ordering $<$, which is consistent with the hierarchy. 
The  components given by the present iterate are denoted by  $ \mathbf{b}_{\alpha }^n $,  if $\alpha \geq  t$ respectively,  
$ \mathbf{b}_{\alpha}^{n+1}$ for $\alpha< t$. 
We optimize over $\mathbf{b}_t$ and  introduce a  corresponding 
 tensor by $$ \mathbf{u}^{n+1}_{-  t } := \tau \big( \{   \mathbf{b}_{\alpha}^{n+1} : \alpha <t \} 
 \cup \{ \mathbf{b}_t \}  \cup \{ \mathbf{b}_{\alpha}^n : \alpha > t\} \big)  \in \mathcal{H}_d \ . $$

Since the parametrization 
$ \tau \big( \{ \mathbf{b}_{\alpha} : {\alpha  \in \mathbb{T}} \}  \big) \in \mathcal{M}_{\leq \mathbf{r}}   $
 is multi-linear in its arguments  
$\mathbf{b}_{\alpha} $, the map  $  \tau_t^{n+1}   $ defined by 
$  \mathbf{b}_t \mapsto  \tau_t^{n+1}   (\mathbf{b}_t) :=   \mathbf{u}^{n+1}_{-t}  $
 is linear.  The  first order optimality condition for the present minimization is  
$$ 0 = \nabla  \mathcal{J} \circ \tau_t^n  ( \mathbf{b}_t ) =  
( \tau_t^n)^* \mathcal{A}^* \big( \mathcal{A} \,  \tau^n_t (\mathbf{b}_t)  - \mathbf{b} \big) \ , $$
which constitutes a linear equation for the unknown $ \mathbf{b}_t$. 
It is not hard to show the following facts.

\begin{theorem}
\begin{enumerate}
\item  Suppose that $ \mathcal{A} $ satisfies the TRIP of order $\mathbf{r}$ with TRIC
$ \delta_{\mathbf{r}} < 1  $ 
and that $\{\mathbf{b}_{\alpha}: \alpha \not=t \}$ is
orthogonalized as above. Then, since
$ \| \tau_t^n (\mathbf{b}_t ) \| = \| \mathbf{b}_t \| $, the TRIP
reads as
$$
( 1 - \delta_{\mathbf{r}} ) \| \mathbf{b}_t \|^2  \leq  \| \mathcal{A}
\tau_t^n ( \mathbf{b}_t ) \|^2 \leq ( 1 + \delta_{\mathbf{r}} ) \|
\mathbf{b}_t \|^2 \ .
$$
In addition,   the functional $ \mathcal{J} \circ \tau_t^n $ is strictly convex, and
$ \mathcal{J} \circ \tau_t^n $ possesses a unique minimizer  $ \mathbf{b}_t^n $.
\item For $ \mathbf{u}^n_{-t}:
 =  \tau^n_t ( \mathbf{b}^n_t )  $, the  sequence $ \mathbf{J} (  \mathbf{u}^n_{-t} ) $ is nonincreasing with $n$, and 
 it is decreasing 
 unless  $\mathbf{u}^n_{-t} $ is a stationary point, i.e.,
$ \nabla  \mathbf{J} ( \mathbf{u}^n_{-t} )  \perp \mathcal{T}_{\mathbf{u}^n_{-t}} $:
 In the latter case the algorithm stagnates. 
 \item  
The sequence of  iterates  $\mathbf{u}^n_{-t} $ is  uniformly bounded.   
\end{enumerate}
\end{theorem}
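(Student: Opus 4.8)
The plan is to prove the three items in turn, the common engine being the fact invoked in the statement that the chosen orthogonalization of the frozen components $\{\mathbf{b}_\alpha : \alpha \neq t\}$ turns the linear parametrization map $\tau_t^n : \mathbf{b}_t \mapsto \mathbf{u}^{n}_{-t}$ into an isometry, $\|\tau_t^n(\mathbf{b}_t)\| = \|\mathbf{b}_t\|$. This ``mixed canonical form'' of a hierarchical tensor follows by unwinding the recursion \eqref{eq:transfer} together with the left/right orthonormality of the cores; it is the one structural point that requires care, while everything else is elementary convexity plus a direct application of the TRIP.

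\emph{Item 1.} First observe that $\operatorname{range}(\tau_t^n) \subseteq \mathcal{M}_{\leq\mathbf{r}}$, since freezing all but one component tensor and varying $\mathbf{b}_t$ cannot push any matricisation rank beyond $r_\alpha$; since $\mathcal{M}_{\leq\mathbf{r}} = \operatorname{clos}(\mathcal{M}_{\mathbf{r}})$ and \eqref{eq:TRIP} is a closed condition, the TRIP of order $\mathbf{r}$ extends to $\mathcal{M}_{\leq\mathbf{r}}$. Applying \eqref{eq:TRIP} to $\mathbf{u} = \tau_t^n(\mathbf{b}_t)$ and using the isometry property yields the displayed two-sided bound in $\mathbf{b}_t$ at once. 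Since $\tau_t^n$ is linear, $\mathcal{J}\circ\tau_t^n(\mathbf{b}_t) = \tfrac12\|\mathcal{A}\tau_t^n(\mathbf{b}_t) - \mathbf{b}\|^2$ is a quadratic in $\mathbf{b}_t$ with Hessian $(\tau_t^n)^*\mathcal{A}^*\mathcal{A}\,\tau_t^n$, and the lower TRIP bound reads $\langle (\tau_t^n)^*\mathcal{A}^*\mathcal{A}\,\tau_t^n\,\mathbf{b}_t,\mathbf{b}_t\rangle = \|\mathcal{A}\tau_t^n(\mathbf{b}_t)\|^2 \geq (1-\delta_{\mathbf{r}})\|\mathbf{b}_t\|^2 > 0$; hence the Hessian is positive definite, $\mathcal{J}\circ\tau_t^n$ is strictly convex and coercive, and therefore possesses a unique minimizer $\mathbf{b}_t^n$.

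\emph{Item 2.} At the micro-step updating the block $t$, all other components are held at their current values, so the previous value of $\mathbf{b}_t$ is a feasible point of the strictly convex problem $\min_{\mathbf{b}_t}\mathcal{J}\circ\tau_t^n$; consequently the new value $\mathcal{J}(\mathbf{u}^{n}_{-t}) = \min_{\mathbf{b}_t}\mathcal{J}\circ\tau_t^n(\mathbf{b}_t)$ does not exceed the value before the step, which gives monotonicity of the sequence $\mathcal{J}(\mathbf{u}^n_{-t})$. By strict convexity, equality occurs exactly when the previous $\mathbf{b}_t$ already satisfied the first-order condition $(\tau_t^n)^*\nabla\mathcal{J}(\mathbf{u}^n_{-t}) = 0$, i.e.\ $\nabla\mathcal{J}(\mathbf{u}^n_{-t}) \perp \operatorname{range}(\tau_t^n)$. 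Reordering the tree so that $t$ becomes the root (which carries no gauging condition), $\operatorname{range}(\tau_t^n)$ is precisely the $t$-block of the tangent-space decomposition, so $\operatorname{range}(\tau_t^n) \subseteq \mathcal{T}_{\mathbf{u}^n_{-t}}$ (using also Proposition~\ref{prop:Riemannian}(b)). Hence a stationary point $\nabla\mathcal{J}(\mathbf{u}^n_{-t}) \perp \mathcal{T}_{\mathbf{u}^n_{-t}}$ forces stagnation, while conversely the decrease is strict as soon as $\nabla\mathcal{J}(\mathbf{u}^n_{-t})$ has a nonvanishing component along $\operatorname{range}(\tau_t^n)$.

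\emph{Item 3.} From Item 2, $\mathcal{J}(\mathbf{u}^n_{-t}) \leq \mathcal{J}(\mathbf{u}^0) =: c_0$ for all $n$, hence $\|\mathcal{A}\mathbf{u}^n_{-t}\| \leq \|\mathbf{b}\| + \sqrt{2c_0}$ by the triangle inequality. Since $\mathbf{u}^n_{-t} \in \mathcal{M}_{\leq\mathbf{r}}$, the lower TRIP bound gives $\|\mathbf{u}^n_{-t}\| \leq (1-\delta_{\mathbf{r}})^{-1/2}(\|\mathbf{b}\| + \sqrt{2c_0})$, a bound independent of $n$; equivalently, since $\|\mathbf{b}_t^n\| = \|\mathbf{u}^n_{-t}\|$ by the isometry property and the remaining orthonormalized cores have fixed norms, the whole parameter tuple $\{\mathbf{b}_\alpha\}$ stays in a bounded set. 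The main obstacle throughout is precisely the non-elementary structural input flagged above -- the isometry property of $\tau_t^n$ under the orthogonalization, and (for the sharp form of Item 2) the identification of $\operatorname{range}(\tau_t^n)$ with a tangent-space block; once these are established for the general hierarchical format, the three claims reduce to standard block-Gau\ss{}--Seidel monotonicity arguments and one-line uses of the TRIP.
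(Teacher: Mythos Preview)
The paper does not actually prove this theorem: it introduces the statement with the phrase ``It is not hard to show the following facts'' and moves on. So there is no reference proof to compare against, and your task reduces to supplying what the authors omitted. In that light your argument is essentially sound and follows the natural route---the isometry $\|\tau_t^n(\mathbf{b}_t)\|=\|\mathbf{b}_t\|$ plus TRIP gives positive-definiteness of the block Hessian (Item~1), block minimisation gives monotone decrease (Item~2), and monotonicity together with the lower TRIP bound gives uniform boundedness (Item~3).

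One point in Item~2 deserves to be made explicit. You correctly identify $\operatorname{range}(\tau_t^n)$ as a tangent-space block and conclude that stationarity $\nabla\mathcal{J}(\mathbf{u})\perp\mathcal{T}_{\mathbf{u}}$ forces stagnation at step $t$. For the converse as stated in the theorem---stagnation implies stationarity---you need that the full tangent space is spanned by the ranges $\operatorname{range}(\tau_t^n)$ as $t$ varies over $\mathbb{T}$, so that orthogonality to each block (which is what a full stagnant sweep delivers) yields orthogonality to all of $\mathcal{T}_{\mathbf{u}}$. This is indeed true by the description of $\mathcal{T}_{\mathbf{u}}$ preceding Proposition~\ref{prop:Riemannian} (every tangent vector is a sum $\sum_t \mathbf{t}^t$ with $\mathbf{t}^t\in\operatorname{range}(\tau_t^n)$), but your write-up leaves the reader to infer it. A one-line remark that $\mathcal{T}_{\mathbf{u}}=\sum_{t\in\mathbb{T}}\operatorname{range}(\tau_t^n)$ would close this small gap.
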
 

This result implies at least the existence of a convergent subsequence.
However, no conclusion can be drawn whether this algorithm recovers the original low rank tensor ${\mathbf{u}}$ from 
${\mathbf{b}} = {\mathcal A} \mathbf{u}$.
For further convergence analysis of ALS, we refer e.g. to \cite{ru}. 

In \cite{wutao} 
 convergence of a Block Gau\ss{} Seidel method was shown by means of the Lojasiewicz-Kurtyka inequality. Also
nonnegative tensor completion has been discussed there.
 It is likely that these arguments apply also to the present setting. 
The ALS is simplified if one rearranges the tree in  each micro-iteration step such that one 
optimizes always the root. This can be easily done for TT tensors with left and right-orthogonalization
\cite{hrs-tt,hrs-als}, and can be modified for general  hierarchical  tensors as well. 
Often, it is preferable to proceed in an opposite order after the optimization of  all components (half-sweep). 
For the Gauss--Southwell variant, where one optimizes the  component with the largest 
defect, convergence estimates from gradient based methods can be applied \cite{su2}.
Although  the latter method converges faster, one faces a high computational overhead.  

Let us remark that  the Block Gau\ss{}-Seidel method and  ALS strategy can be used 
in various situations, in particular, as one ingredient in the TIHT and RGI algorithms from the previous section. 
For instance, ALS can be applied directly after a gradient step defining the operator $\mathcal{R}$ or
one can use a simple half-sweep for approximating  the gradient correction  $\mathbf{y}^{n+1} $
by  a rank $\mathbf{r}$ tensor in order to define the nonlinear projection $ \mathcal{R}$.

\section{Numerical results} 

For numerical tests, we concentrate on the HOSVD and the tensor iterative hardthresholding (TIHT) algorithm for recovering
order $d=3$ tensors from Gaussian measurement maps ${\mathcal A}:  \mathcal{H}_3=\bigotimes_{i=1}^3\mathbb{R}^{n_i} \to \mathbb{R}^m$, i.e.,
the entries of ${\mathcal A}$ identified with a tensor in $\mathbb{R}^{m} \otimes \bigotimes_{i=1}^3 \mathbb{R}^{ n_i}$ are i.i.d.\   
$\mathcal{N}\big( 0,\frac{1}{m}\big)$ random variables.

For these tests, we generate tensors  $\mathbf{u}  \in  \mathcal{H}_3 $
of rank $\mathbf{r} = (r_1, r_2, r_3)$  via its Tucker decomposition. Let us suppose that 
\begin{equation*}
 \mathbf{u}  (\mu_1 , \mu_2 , \mu_3 )  =  \sum_{k_1=1}^{r_1}\sum_{k_2=1}^{r_2}\sum_{k_3=1}^{r_3} \mathbf{c} ( k_1 , k_2 , k_3 ) 
 \mathbf{b}_{k_1}^1 (\mu_1)  \mathbf{b}_{k_2}^2 (\mu_2) \mathbf{b}_{k_3}^3 (\mu_3) 
\end{equation*}
 is the corresponding  Tucker decomposition.
Each entry of the core tensor $\mathbb{c}$ is taken independently
from the normal distribution, $\mathcal{N}  (0, 1)$, and the component tensors 
$ \mathbf{b}^j \in  \mathbb{R}^{n_j \times r_j  } $  are the first $r_j$  left singular vectors of a matrix
$\mathbf{M}^j  \in  \mathbb{R}^{n_j \times n_j}$ whose elements are also drawn independently
from the normal distribution $\mathcal{N}  (0, 1)$.

We then form the measurements ${\mathbf{b}} = {\mathcal A}\mathbf{u}$ and run the TIHT algorithm with the specified multi-linear 
rank $\mathbf{r} = (r_1, r_2, r_3)$ on ${\mathbf{b}}$. We test whether the algorithm successfully reconstructs the original tensor
and say that the algorithm converged if $\|\mathbf{u}-\hat{\mathbf{u}}\| <10^{-4}$. We stop the algorithm if it did not converge after $5000$ iterations.

Figures \ref{fig1}--\ref{fig3} present the recovery results for low rank tensors of size $10 \times 10 \times 10$ (Figures \ref{fig1} and Figure \ref{fig2}) and $6 \times 10 \times 15$ (Figure \ref{fig3}). The horizontal axis
represents the number of measurements taken with respect to
the number of degrees of freedom of an arbitrary tensor of this
size. To be more precise, for a tensor of size  $n_1 \times n_2 \times n_3$, the
number $\overline{n}$  on the horizontal axis represents 
 $m  =\lceil n_1 n_2 n_3 \frac{\overline{n}}{100}\rceil$  measurements. The vertical axis represents the percentage of the successful recovery.  For fixed tensor dimensions $n_1 \times n_2 \times n_3$, fixed HOSVD-rank $\mathbf{r}=\left(r_1,r_2,r_3\right)$ and fixed number of measurements $m$, we performed $200$ simulations.

\begin{figure}[!ht]
 \begin{minipage}{0.7\textwidth}
  \centerline{\includegraphics[width=0.7\textwidth]{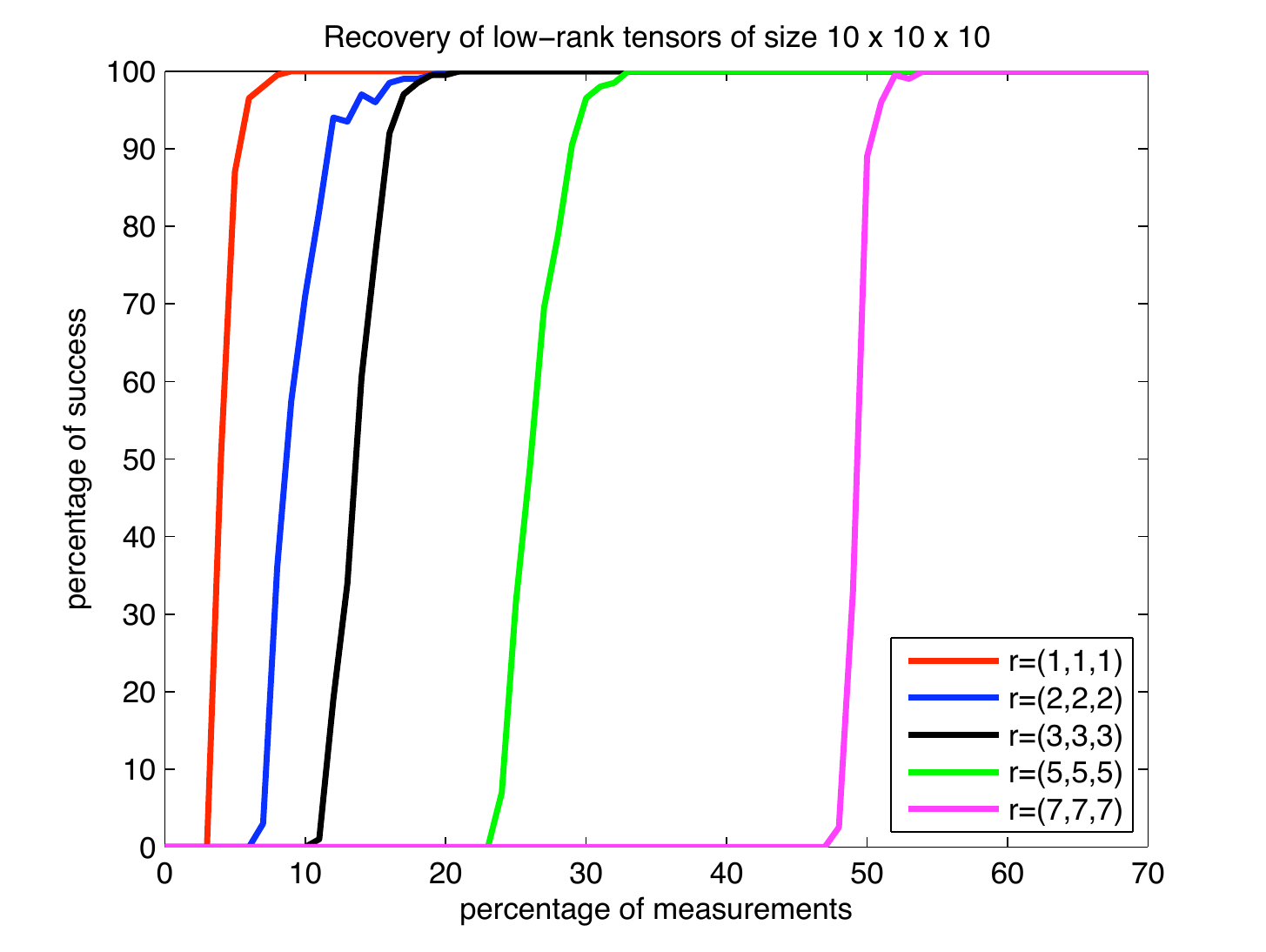}}
  \caption{Numerical results for \mbox{$10 \times 10 \times 10$} tensors with same $k$-ranks.}\label{fig1}
 \end{minipage}
\hfill
 \begin{minipage}{0.7\textwidth}
  \centerline{\includegraphics[width=0.7\textwidth]{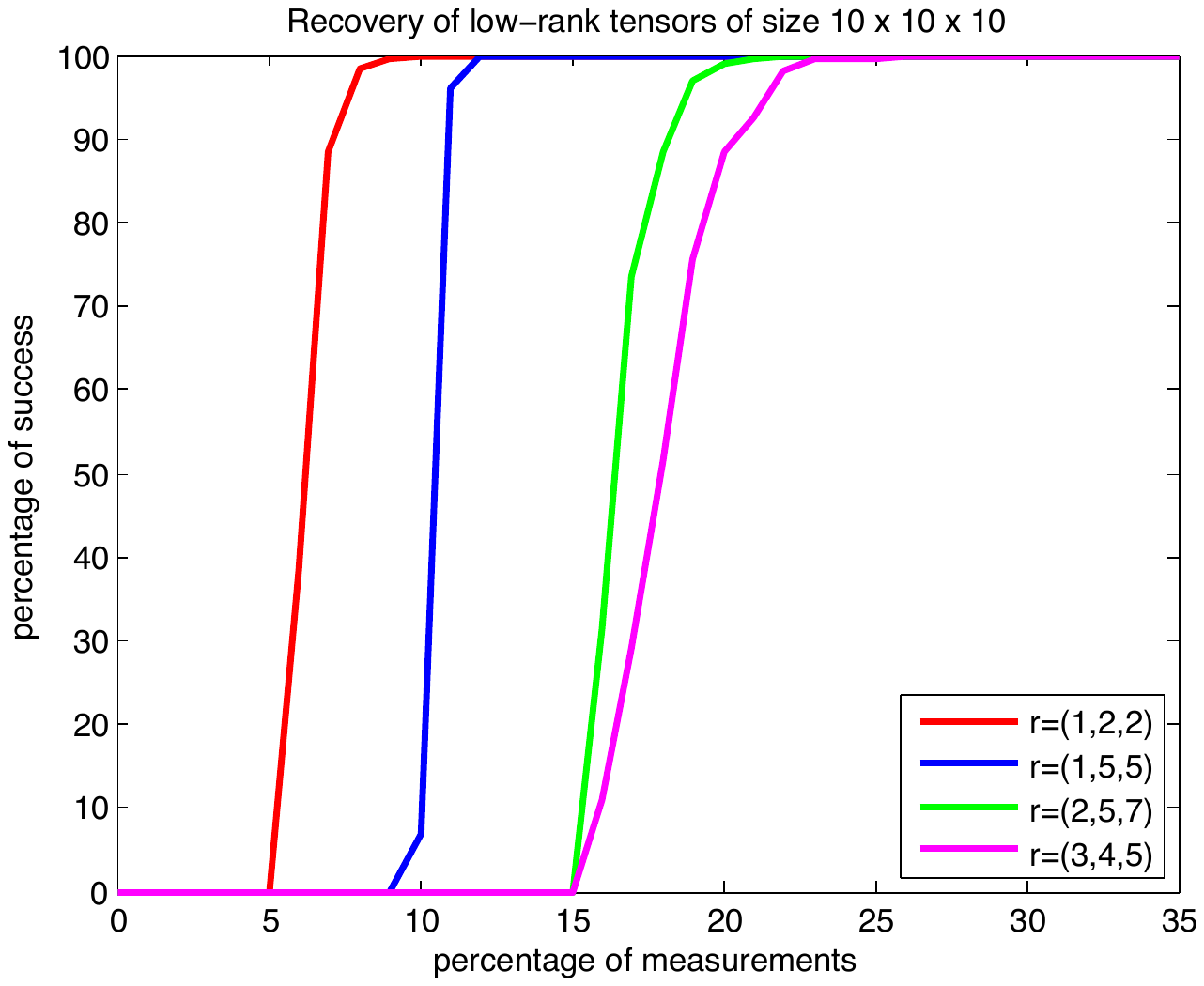}}
  \caption{Numerical results for \mbox{$10 \times 10 \times 10$} tensors with \mbox{different} $k$-ranks.}\label{fig2}
 \end{minipage}
 \hfill
  \begin{minipage}{0.7\textwidth}
  \centerline{\includegraphics[width=0.7\textwidth]{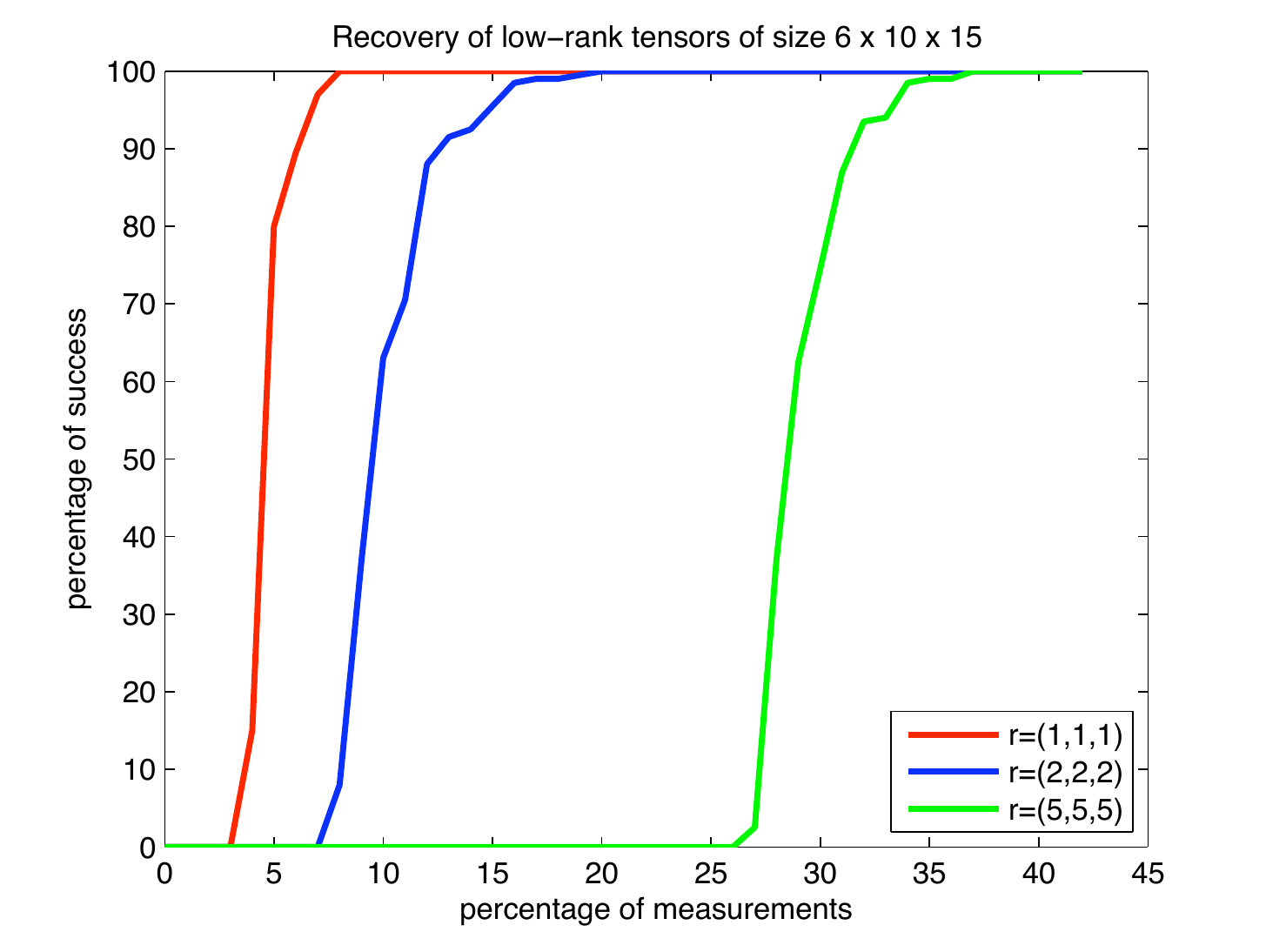}}
  \caption{Numerical results for \mbox{$6 \times 10 \times 15$} tensors with same $k$-ranks.}\label{fig3}
 \end{minipage}
 \caption[justification=centering,margin=2cm]{Numerical results for tensor IHT algorithm.}
\label{Figures}
\end{figure}

Table \ref{tableNum} complements Figure \ref{Figures}. With $\%_{\max}$ we denote the maximal percentage of measurements for which we did not manage to recover even one tensor out of $200$. 
The minimal percentage of measurements for full recovery is denoted by $\%_{\min}$. The last column represents the number of iterations needed for full recovery with $m=\lceil n_1 n_2 n_3 \frac{\%_{\min}}{100} \rceil$ number of measurements.

\begin{table}[h]
\centering
\caption{Numerical results for tensor IHT algorithm.}
\begin{tabular}{| c| c | c | c | c|}
\hline
$\,n_1 \times n_2 \times n_3\,$ & rank          &	  $\%_{\max}$ & $\%_{\min}$ & \# of iterations for $\%_{\min}$ \\
\hline
$10 \times 10 \times 10\,$ & $\,(1,1,1)\,$  & 	$3$		&	$9$		&	$321$	\\
$10 \times 10 \times 10$ & $(2,2,2)$  & 	$6$		&	$20$	&	$185$	\\
$10 \times 10 \times 10$ & $(3,3,3)$  & 	$10$	&	$21$	&	$337$	\\
$10 \times 10 \times 10$ & $(5,5,5)$  & 	$23$	&	$33$	&	$547$	\\
$10 \times 10 \times 10$ & $(7,7,7)$  & 	$47$	&	$54$	&	$1107$	\\
\hline \hline
$10 \times 10 \times 10$ & $(1,2,2)$  & 	$5$		&	$10$	&	$588$	\\
$10 \times 10 \times 10$ & $(1,5,5)$  & 	$9$		&	$12$	&	$1912$	\\
$10 \times 10 \times 10$ & $(2,5,7)$  & 	$15$	&	$22$	&	$696$	\\
$10 \times 10 \times 10$ & $(3,4,5)$  & 	$15$	&	$26$	&	$384$	\\
\hline \hline
$6 \times 10 \times 15$ & $(1,1,1)$  & 		$3$		&	$8$		&	$511$	\\
$6 \times 10 \times 15$ & $(2,2,2)$  & 		$7$		&	$20$	&	$214$	\\
$6 \times 10 \times 15$ & $(5,5,5)$  & 		$26$	&	$37$	&	$501$	\\
\hline
\end{tabular}
\label{tableNum}
\end{table}

\section{Concluding remarks}    
In this chapter we considered low rank tensor recovery for hierarchical tensors  extending the classical Tucker format to a multi-level framework. 
For low ranks, this model can  break  the curse of dimensionality. Its number of degrees of freedom scale 
like $\mathcal{O} (ndr + d r^3)  \ll \mathcal{O} (n^d)$ 
and $ \mathcal{O} (ndr^2 )$ for TT tensors instead of $\mathcal{O} (n^d)$.    
Under the assumption of a tensor restricted isometry property, we have shown local convergence for Riemannian gradient iterations and
global convergence under a certain condition on the iterates of the tensor iterative hard thresholding algorithm  
for hierarchical tensors,  
including the classical Tucker format as well as tensor trains. 
For instance for TT tensors, an estimate of the TRIP for Gaussian measurement maps
was provided that requires the number of measurements to scale like $m \sim ndr^2 \log (dr)$.
However, it is still not clear whether the logarithmic factor is needed.

Let us finally mention some open problems. 
One important task is to establish global convergence to the original tensor of any of the discussed algorithms, without additional assumptions such as \eqref{condA} on the iterates.
In addition, robustness and stability for the Riemannian gradient method are still open.
Further, also the TRIP related to general HT tensors for Gaussian measurement maps is not yet established.  
Since the TRIP does not hold for the completion problem, it is not clear yet whether a low rank tensor 
can be recovered from less than $ \mathcal{O} (n^{d/2} ) $ entries. 
 
%
%
%

\end{document}